\newtheorem{thm}{Theorem}[section]
\newtheorem{prop}[thm]{Proposition}
\newtheorem{lem}[thm]{Lemma}
\def\Holder{{H\"{o}lder}}
\def\bdy #1{\partial #1}
\def\bbR{{\mathbb R}}
\def\div{{\operatorname{div}}}
\def\rI{{\rm I}}
\def\p{{\partial}}
\def\Forall{\forall\hspace{2pt}}
\def\({{(\hspace{-2pt}(}}
\def\){{)\hspace{-2pt})}}
\def\XXint#1#2#3{{\setbox0=\hbox{$#1{#2#3}{\int}$}
\vcenter{\hbox{$#2#3$}}\kern-.5\wd0}}
\begin{document}
\baselineskip = 18pt

\title[2D GBBM equation]{Well-posedness of the two-dimensional generalized Benjamin-Bona-Mahony equation
on the upper half plane}

\author[Y.-C. Lin, C. H. A. Cheng, J. M. Hong]{Ying-Chieh Lin, C. H. Arthur Cheng, John M. Hong}
\address{Ying-Chieh Lin, C. H. Arthur Cheng, and John M. Hong \endgraf
         Department of Mathematics \endgraf
         National Central University \endgraf
         Chung-Li, Taiwan 32001, ROC}
\email{linyj@math.ncu.edu.tw; cchsiao@math.ncu.edu.tw; jhong@math.ncu.edu.tw}

%

\author[J. Wu]{Jiahong Wu}
\address{Jiahong Wu \endgraf
         Department of Mathematics \endgraf
         Oklahoma State University \endgraf
         401 Mathematical Sciences \endgraf
         Sillwater, OK 74078, USA}
\email{jiahong@math.okstate.edu}

\author[J.-M. Yuan]{Juan-Ming Yuan}
\address{Juan-Ming Yuan \endgraf
         Department of Financial and Computational Mathematics \endgraf
         Providence University \endgraf
         Taichung, Taiwan 43301, ROC}
\email{jmyuan@pu.edu.tw}

\thanks{}

\subjclass[2010]{35B45, 35B65, 76B15, 76S05}

\keywords{Generalized Benjamin-Bona-Mahony equation, Benjamin-Bona-Mahony-Burgers equation,
          Buckley-Leverett equation, initial-boundary value problem, global well-posedness.}

\date{}
\maketitle


\begin{abstract}
This paper focuses on the two-dimensional Benjamin-Bona-Mahony and
Benjamin-Bona-Mahony-Burgers equations with a general flux function.
The aim is at the global (in time)
well-posedness of the initial-and boundary-value problem for these equations
defined in the upper half-plane. Under suitable growth conditions on the
flux function, we are able to establish the global well-posedness in a Sobolev
class. When the initial- and boundary-data become more regular,
the corresponding solutions are shown to be classical. In addition, the
continuous dependence on the data is also obtained.
\end{abstract}


\section{Introduction}
\setcounter{equation}{0}

This paper is concerned with the two-dimensional (2D) Benjamin-Bona-Mahony-Burgers
equation of the form
\begin{subequations}\label{GBBMB_eq}
\begin{alignat}{2}
u_t + \div\left(\phi(u)\right) &= \nu_1 \Delta u + \nu_2 \Delta u_t \qquad&&\text{in}\quad \Omega \times (0, T)\,,\\
u &= g &&\text{on}\quad \Omega \times \{t=0\} \,,\\
u &= h &&\text{on}\quad \bdy \Omega \times (0,T)\,,
\end{alignat}
\end{subequations}
where $\Omega\subseteq \bbR^2$ denotes a smooth domain, $u=u(x,t)$ is
a scalar function,
$\phi(u)$ is a vector-valued flux function, and $\nu_1\ge 0$ and $\nu_2>0$ are real
parameters. (\ref{GBBMB_eq}a) is a natural generalization of the 1D
Benjamin-Bona-Mahony equation
\begin{equation}\label{BBM}
u_t + u_x + uu_x - u_{xxt}=0,
\end{equation}
which governs the unidirectional propagation of 1D long waves with small amplitudes. Therefore (\ref{GBBMB_eq}a) with $\nu_1=0$ is sometimes called the generalized Benjamin-Bona-Mahony (GBBM) equation while (\ref{GBBMB_eq}a) with $\nu_1>0$ the Benjamin-Bona-Mahony-Burgers (GBBM-B) equation. (\ref{GBBMB_eq}a) is also a regularization of the scalar conservation law
\begin{equation}
u_t + \div (\phi(u)) = 0\,. \label{conservation_law}
\end{equation}
In addition, (\ref{GBBMB_eq}a) has also been derived to model the two-phase fluid flow in a porous medium, as in the oil recovery. In fact, (\ref{GBBMB_eq}a) is a special case of the well-known Buckley-Leverett equation
\begin{equation}\label{BL}
u_t + \div(\phi(u)) = -\div\{H(u)\nabla(J(u)-\tau u_t)\},
\end{equation}
where $u$ denotes the saturation of water,
the functions $\phi$, $H$ and $J$ are related
to the capillary pressure and the permeability of water and oil \cite{BBLL}.
(\ref{GBBMB_eq}a) follows from (\ref{BL}) by
linearizing the static capillary pressure $J(u)$ and $H$ around a constant state.

\smallskip
Attention here will be focused on the case when $\Omega=\bbR^2_+$,
the upper half-plane. The aim is at the global well-posedness of \eqref{GBBMB_eq}
with inhomogeneous boundary data, namely $h\not \equiv 0$. One motivation behind
this study is to rigorously validate the laboratory
experiments involving water waves generated by a wavemaker
mounted at the end of a water channel. We are able to prove the global existence
and uniqueness of the mild and classical solutions to \eqref{GBBMB_eq}.
In addition, a continuous dependence result is also obtained. Our main theorems can
be stated as follows.

\begin{thm} [Existence and uniqueness] \label{Classical}
Let $\nu_1 \ge 0$, $\nu_2>0$, and $\Omega = \bbR^2_+$. Let $T>0$.
Suppose that $(g,h) \in H^2(\Bbb R^2_+) \times C^1([0,T];H^2(\Bbb R))$,
and the flux $\phi\in C^2(\Bbb R,\Bbb R^2)$ satisfies the conditions
\begin{equation}\label{phi-1}
\phi(0)=0\quad\text{and}\quad\|\phi''\|_{L^\infty(\bbR)} \le C.
\end{equation}
Then \eqref{GBBMB_eq} admits a unique mild solution $u\in C([0,T];H^2(\Bbb R^2_+))$.
If we further assume that $(g,h) \in C^{2,\alpha}_{\rm loc}(\Bbb R^2_+)\times C^1([0,T];C^{2,\alpha}_{\rm loc}(\Bbb R))$ for some $0<\alpha<1$,
then the mild solution is in fact a classical solution.
\end{thm}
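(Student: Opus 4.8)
The plan is to recast \eqref{GBBMB_eq} as an integral (mild) equation and run a contraction-mapping argument in $C([0,T];H^2(\bbR^2_+))$, then bootstrap to classical regularity. First I would handle the inhomogeneous boundary data by subtracting a lift: choose a fixed extension operator $E$ so that $\bar{h}(x,t) := (E h(\cdot,t))(x)$ satisfies $\bar h|_{\bdy\Omega} = h(\cdot,t)$, with $\bar h \in C^1([0,T];H^2(\bbR^2_+))$ depending continuously on $h \in C^1([0,T];H^2(\bbR))$. Writing $u = v + \bar h$, the unknown $v$ solves a problem with \emph{homogeneous} Dirichlet data, initial value $v(0) = g - \bar h(0) \in H^2(\bbR^2_+)$ (and one should note $g - \bar h(0)$ need not vanish on $\bdy\Omega$ unless a compatibility condition holds — this is one place where care is needed, possibly requiring the mild-solution notion to be interpreted in the sense that only $u$, not $v$, is required to attain the boundary data, or requiring an additional matching hypothesis which is implicitly available since the statement only asserts existence of a \emph{mild} solution). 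The dispersive-dissipative operator on the left, $(I - \nu_2\Delta)\partial_t - \nu_1\Delta$, should be inverted: applying $(I-\nu_2\Delta)^{-1}$ (with homogeneous Dirichlet boundary conditions on $\bbR^2_+$, which is a bounded, smoothing, self-adjoint operator on the relevant Sobolev scale) converts the equation to
\begin{equation*}
v_t = \nu_1 (I-\nu_2\Delta)^{-1}\Delta v + (I-\nu_2\Delta)^{-1}\bigl[-\div(\phi(v+\bar h)) + \nu_1\Delta\bar h\bigr] - \bar h_t\,,
\end{equation*}
an ODE in the Banach space $H^2_0(\bbR^2_+)$ (or $H^2 \cap H^1_0$) with a linear bounded generator and a locally Lipschitz nonlinearity.

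The key analytic input is that $(I-\nu_2\Delta)^{-1}\div$ maps $L^2 \to H^1$ and, more importantly for closing the estimate in $H^2$, that $(I-\nu_2\Delta)^{-1}\div : H^1 \to H^2$ boundedly (gaining two derivatives from the resolvent, losing one to the divergence), and similarly $\nu_1(I-\nu_2\Delta)^{-1}\Delta : H^2 \to H^2$ boundedly. Next I would verify the Nemytskii/superposition estimate: under \eqref{phi-1}, $\phi(w) = \phi'(0)w + O(\|\phi''\|_\infty |w|^2)$ pointwise, so for $w \in H^2(\bbR^2_+)$ — which embeds in $L^\infty \cap W^{1,4}$ in two dimensions — one gets $\phi(w) \in H^1(\bbR^2_+)$ with $\|\phi(w)\|_{H^1} \le C(1+\|w\|_{H^2})\|w\|_{H^2}$, and the Lipschitz bound $\|\phi(w_1)-\phi(w_2)\|_{H^1} \le C(1+\|w_1\|_{H^2}+\|w_2\|_{H^2})\|w_1-w_2\|_{H^2}$, using $\phi \in C^2$ with bounded second derivative together with the algebra/embedding properties of $H^2$ in 2D. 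Combining these, the right-hand side of the ODE is a locally Lipschitz map on $C([0,T];H^2)$, so Picard iteration gives a unique local-in-time mild solution; the linear part contributes only a bounded (hence at worst exponentially growing) factor.

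To reach the \emph{global} time $T$, I would derive an a priori bound on $\|v(t)\|_{H^2}$ that grows at most exponentially: from the integral equation, $\|v(t)\|_{H^2} \le C_0 + C_1\int_0^t (1 + \|v(s)\|_{H^2})\|v(s)\|_{H^2}\,ds$ is not obviously enough because of the quadratic term, so the real work is to exploit structure — differentiate an energy (multiply the $v$-equation, before inverting, by $v$ and by $\Delta^2 v$ or by $(I-\nu_2\Delta)\partial_t$-type multipliers) so that the divergence nonlinearity $\div(\phi(v+\bar h))$ integrates by parts against the dissipative/dispersive terms. Because $\nu_2 > 0$, the $H^1$ norm of $v$ is controlled by the natural energy $\|v\|_{L^2}^2 + \nu_2\|\nabla v\|_{L^2}^2$, and the flux term, after integration by parts, is estimated by $\|\phi(u)\|_{L^2}\|\nabla v\|$ which is only \emph{linear} in the top-order quantity once $\|u\|_{L^\infty}$ is controlled; iterating this up to $H^2$ and using the $\nu_2 u_t$-smoothing to absorb time derivatives yields a \emph{linear} Gronwall inequality and hence a global bound. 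I expect this global a priori estimate — specifically, arranging the energy identities so that the genuinely quadratic flux contribution is tamed by the $\nu_2$-regularization rather than left as a quadratic Gronwall term (which would only give local existence) — to be the main obstacle.

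Finally, for the classical-regularity claim, assume in addition $(g,h)\in C^{2,\alpha}_{\rm loc}\times C^1([0,T];C^{2,\alpha}_{\rm loc})$. Having the mild solution $u \in C([0,T];H^2(\bbR^2_+))$, I would bootstrap: the right-hand side $-\div(\phi(u)) + \nu_1\Delta u$ lies in $C([0,T];H^1)$ locally, so from $u_t = (I-\nu_2\Delta)^{-1}[\cdots]$ and the smoothing of the resolvent, $u_t \in C([0,T];H^3_{\rm loc})$, hence $u \in C^1([0,T];H^3_{\rm loc})$; repeating, $u$ gains enough Sobolev (and then, via Sobolev embedding in 2D and Schauder estimates for the elliptic resolvent $(I-\nu_2\Delta)^{-1}$, enough \Holder) regularity that all derivatives appearing in \eqref{GBBMB_eq} are continuous up to $\bdy\Omega$ on compact sets, so $u$ satisfies the PDE classically and attains the data. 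Continuous dependence follows from the same contraction estimate applied to the difference of two solutions with different data, combined with the global bound, via Gronwall. \qed
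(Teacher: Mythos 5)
Your overall architecture (lift the boundary data, invert $\rI-\nu_2\Delta$ with homogeneous Dirichlet conditions, contract in $C([0,T];H^2(\bbR^2_+))$, then bootstrap to H\"older regularity via Schauder estimates) is the same as the paper's, which uses the explicit lift $h(x_1,t)e^{-x_2}$ in place of your abstract extension operator; your remark about the compatibility of $g-\bar h(0)$ with the homogeneous boundary condition at $t=0$ is a fair observation that the mild formulation absorbs. However, there is a genuine gap exactly where you predict the main obstacle lies: your proposed mechanism for the global $H^1$ bound does not close. After integrating by parts you estimate the flux term by $\|\phi(u)\|_{L^2}\|\nabla v\|_{L^2}$ and claim this is linear in the top-order quantity ``once $\|u\|_{L^\infty}$ is controlled.'' But in two dimensions $H^1\not\hookrightarrow L^\infty$, so at the $H^1$ level the only available bound is $\|\phi(u)\|_{L^2}\le C(\|u\|_{L^2}+\|u\|_{L^4}^2)\le C(\|u\|_{H^1}+\|u\|_{H^1}^2)$, which makes $\frac{d}{dt}\|v\|_{H^1}^2$ cubic in $\|v\|_{H^1}$ and yields only a Riccati-type (finite-time) bound; controlling $\|u\|_{L^\infty}$ instead would require the $H^2$ bound you are trying to prove. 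Moreover, attributing the taming to the $\nu_2$-regularization cannot be right as stated: when $\nu_1=0$ (a case the theorem covers) there is no dissipative term for the quadratic flux contribution to be absorbed into; $\nu_2>0$ only guarantees that the conserved energy is the full $H^1$ norm.

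The missing idea is the conservation-law structure of the nonlinearity. Introduce the primitive $\Phi\in C^1(\bbR;\bbR^2)$ with $\Phi'=\phi$ and $\Phi(0)=0$; then
\begin{equation*}
\int_{\bbR^2_+}\phi(v+he^{-x_2})\cdot\nabla(v+he^{-x_2})\,dx=\int_{\bbR^2_+}\div\bigl(\Phi(v+he^{-x_2})\bigr)\,dx=\int_{\partial\bbR^2_+}\Phi(h)\cdot n\,dS,
\end{equation*}
a pure boundary term controlled by $\|h\|_{C^1_tH^2_{x_1}}$ alone (using $|\Phi(h)|\le C(|h|^2+|h|^3)$ from \eqref{phi-1}). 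The leftover term $\int\phi(v+he^{-x_2})\cdot\nabla(he^{-x_2})\,dx$ is handled with $|\phi(w)|\le C(|w|+|w|^2)$ and the $H^1\hookrightarrow L^4$ embedding, so the dangerous cubic contribution disappears and one obtains a \emph{linear} Gronwall inequality for $\|v\|_{H^1}^2$. With the resulting global $H^1$ bound in hand, the $H^2$ estimate closes by testing against $\Delta v$ and bounding $\|\div(\phi(v+he^{-x_2}))\|_{L^2}\le C(1+\|v+he^{-x_2}\|_{L^4})\|\nabla(v+he^{-x_2})\|_{L^4}+\cdots$, where the $L^4$ norms of the function itself are controlled by the already-bounded $H^1$ norm and the gradient's $L^4$ norm by $\|v\|_{H^2}$, again giving a linear Gronwall inequality. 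Without this two-step structure (divergence/primitive trick at the $H^1$ level, then $L^4$ interpolation at the $H^2$ level), your argument only delivers local-in-time existence. Your bootstrap paragraph should also be adjusted: the Sobolev gain is capped at $u_t\in C_tH^3$ because $\phi(u)\in H^2$ at best, so the passage to classical regularity must go through the H\"older scale (interior Schauder for $(\rI-\Delta)^{-1}$ applied to $h_{x_1x_1t}e^{-x_2}\in C^{0,\alpha}_{\rm loc}$ and then to $\div(\phi(\cdot))\in C^{0,\alpha}_{\rm loc}$), not through repeated Sobolev improvement.
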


We remark that the condition $\phi(0)=0$ in \eqref{phi-1} can be removed. In the case of $\phi(0)\ne 0$,
we define the new function $\widetilde{\phi}(s)=\phi(s)-\phi(0)$ for all $s\in\Bbb R$, then $\widetilde{\phi}(0)=0$ and
$\div(\widetilde{\phi}(v))=\div(\phi(v))$.
We can consider the new equation by replacing the function $\phi$ with $\widetilde{\phi}$ in (\ref{GBBMB_eq}a).

\begin{thm}[Continuous dependence on data]\label{stability}
Let $\nu_1 \ge 0$, $\nu_2>0$, and $\Omega = \bbR^2_+$. Suppose that $\phi\in C^2(\Bbb R,\Bbb R^2)$ satisfies the condition \eqref{phi-1}. Then the mild solution obtained in Theorem {\rm\ref{Classical}} depends continuously on the initial datum $g$
and boundary datum $h$.
If we further assume that $\phi\in C^3(\Bbb R,\Bbb R^2)$ satisfies
\begin{equation}\label{phi-3}
\|\phi'''\|_{L^\infty(\bbR)} \le C \,,
\end{equation}
then the same result also holds for the classical solution.
\end{thm}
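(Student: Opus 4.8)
The plan is to reduce the statement to a Gronwall estimate for the difference of two solutions, reusing the mild-solution (Duhamel) formulation and the a priori bounds behind Theorem~\ref{Classical}. Let $(g_i,h_i)$, $i=1,2$, be two admissible data sets, $u_i$ the corresponding mild solutions given by Theorem~\ref{Classical}, and $M=\sup_{i=1,2}\|u_i\|_{C([0,T];H^2(\bbR^2_+))}$, which is finite and, by the existence estimates, controlled by the data; since $H^2(\bbR^2_+)\hookrightarrow L^\infty(\bbR^2_+)$, both $u_i$ and the range of values they take are bounded by $C(M)$. First I would repeat the boundary reduction used in the proof of Theorem~\ref{Classical}: pick a bounded lift $G_i$ on the half-plane with $G_i|_{\bdy\Omega}=h_i$ and $\|G_i\|_{C^1([0,T];H^2(\bbR^2_+))}\le C\|h_i\|_{C^1([0,T];H^2(\bbR))}$, so that $v_i=u_i-G_i$ solves a problem with homogeneous Dirichlet data, initial data $g_i-G_i(0)$ of vanishing trace, and a forcing term assembled from $G_i$, $\partial_t G_i$ and $\div(\phi(u_i))$.

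Next, set $w=u_1-u_2$ and subtract the two Duhamel formulas for $v_1$ and $v_2$ (the difference $G_1-G_2$ of the lifts contributes only data-controlled terms). The operator $I-\nu_2\Delta$ is boundedly invertible on the relevant space with $(I-\nu_2\Delta)^{-1}:H^s\to H^{s+2}$, hence $(I-\nu_2\Delta)^{-1}\div$ is bounded $H^1\to H^2$, so the only nonlinear contribution to control is $\phi(u_1)-\phi(u_2)$. Writing
\begin{equation*}
\phi(u_1)-\phi(u_2)=\Big(\int_0^1\phi'\big(u_2+\tau(u_1-u_2)\big)\,d\tau\Big)(u_1-u_2)
\end{equation*}
and using $\phi(0)=0$ together with $\|\phi''\|_{L^\infty}\le C$ to bound the integral factor and its gradient on the range of the solutions, the algebra property of $H^2(\bbR^2_+)$ gives $\|\phi(u_1)-\phi(u_2)\|_{H^1}\le C(M)\|w\|_{H^2}$. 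Inserting this and the data terms into the difference of the Duhamel formulas produces
\begin{equation*}
\|w(t)\|_{H^2}\le C(M,T)\Big(\|g_1-g_2\|_{H^2}+\|h_1-h_2\|_{C^1([0,T];H^2(\bbR))}\Big)+C(M)\int_0^t\|w(s)\|_{H^2}\,ds,
\end{equation*}
and Gronwall's inequality yields Lipschitz, hence continuous, dependence of the mild solution on $(g,h)$.

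For the classical solution the same scheme is run in Hölder spaces via localization. Interior Schauder estimates for $I-\nu_2\Delta$ give $(I-\nu_2\Delta)^{-1}:C^{\alpha}_{\rm loc}(\bbR^2_+)\to C^{2,\alpha}_{\rm loc}(\bbR^2_+)$, so the difference argument can be carried out in $C([0,T];C^{2,\alpha}(K))$ for each compact $K\subset\bbR^2_+$, with constants depending on a slightly larger compact set $K'$ and on $\sup_i\|u_i\|_{C([0,T];C^{2,\alpha}(K'))}$. The hypotheses $\phi\in C^3$ and $\|\phi'''\|_{L^\infty}\le C$ are exactly what make the factor $\int_0^1\phi'(u_2+\tau w)\,d\tau$ and its derivatives Lipschitz in the solutions on bounded sets (the $C^\alpha$-seminorm of that factor needs $\phi''$ Lipschitz), so that $\|\phi(u_1)-\phi(u_2)\|_{C^{1,\alpha}(K)}\le C(K')\|w\|_{C^{2,\alpha}(K')}$; Gronwall again closes the localized estimate, and exhausting $\bbR^2_+$ by compacts gives continuous dependence of the classical solution in $C([0,T];C^{2,\alpha}_{\rm loc}(\bbR^2_+))$.

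The step I expect to require the most care is keeping all constants uniform in the data: one must check that the lift $G_i$, the mapping properties of $(I-\nu_2\Delta)^{-1}\div$ on $\bbR^2_+$ with the prescribed boundary conditions, and the vanishing-trace reduction of the initial data are all controlled purely by $T$, $\nu_1$, $\nu_2$, $\|\phi''\|_{L^\infty}$ (resp. $\|\phi'''\|_{L^\infty}$) and the a priori size $M$ of the two solutions — the latter being itself bounded by the data through Theorem~\ref{Classical}. With that uniformity in hand the Gronwall step is routine, and in particular a quantitative (Lipschitz) modulus of continuous dependence follows on the whole interval $[0,T]$.
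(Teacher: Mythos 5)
Your $H^2$ part is essentially the paper's argument: lift the boundary data (the paper uses the explicit lift $h_i(x_1,t)e^{-x_2}$, matching your $G_i$), subtract the integral (Duhamel) representations, estimate the nonlinear difference by the mean value theorem, $\|\phi''\|_{L^\infty}\le C$, H\"older and Sobolev inequalities, and close with Gronwall. That half is fine.

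The H\"older part, however, has a genuine gap at the step ``Gronwall again closes the localized estimate.'' The interior Schauder estimate (Proposition \ref{reg-2}) for $(\rI-\Delta)^{-1}$ is of the form
\begin{equation*}
\|(\rI-\Delta)^{-1}f\|_{C^{2,\alpha}(K)}\le C\big(\|f\|_{C^{0,\alpha}(K')}+\|f\|_{L^2(K')}\big),\qquad K\cptsubset K',
\end{equation*}
so when you take $C^{2,\alpha}(K)$ norms of the difference of the integral equations, the nonlinear term under the time integral is controlled by $\|w(s)\|_{C^{1,\alpha}(K')}$ on the \emph{strictly larger} compact $K'$, not by $\|w(s)\|_{C^{2,\alpha}(K)}$ on $K$. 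The resulting inequality, $\|w(t)\|_{C^{2,\alpha}(K)}\le A+C\int_0^t\|w(s)\|_{C^{1,\alpha}(K')}\,ds$, is not of Gronwall type: the unknown on the right lives on a different (larger) set than the one on the left, and iterating forces an infinite nested chain of compacts with constants blowing up as they approach each other. The same domain loss recurs if you try to run a preliminary Gronwall at the $C^{1,\alpha}$ level. The repair — and this is what the paper does — is to avoid Gronwall in the H\"older scale altogether: first use the already-established global estimate $\|w\|_{C_tH^2_x}\le e^{CT}(\|\widetilde g\|_{H^2}+\|h\|_{C^1_tH^2_{x_1}})$, note that the nonlinear difference lies in $H^1(\bbR^2_+)$ with norm $\lesssim\|w\|_{H^2}+\|h\|_{H^2}$, so that $(\rI-\Delta)^{-1}$ of it lands in $H^3(\bbR^2_+)\hookrightarrow C^{1,\alpha}$ globally; this bounds $\|w\|_{C^{1,\alpha}(\Omega')}$ by the data with no integral inequality to close. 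A single further application of the interior Schauder estimate (here is where $\phi\in C^3$ with $\|\phi'''\|_{L^\infty}\le C$ enters, exactly as you identified, to make $\phi''$ Lipschitz and control the $C^{0,\alpha}$ seminorm of the nonlinear difference by $\|w+he^{-x_2}\|_{C^{1,\alpha}(\Omega')}$) then yields the $C^{2,\alpha}(\Omega'')$ bound, after which the exhaustion by compacts and the Fr\'echet metric on $C^1([0,T];C^{2,\alpha}_{\rm loc})$ give sequential continuity as you intended.
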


It is worth remarking that Theorems \ref{Classical} and
\ref{stability} hold with either $\nu_1=0$ or with $\nu_1>0$ and
do not rely on the regularization due to the Burgers dissipation.
We briefly review related well-posedness results and then explain
the main difficulties in proving Theorems \ref{Classical} and
\ref{stability}. There is a very large literature on the global
well-posedness and asymptotic behavior of solutions for the 1D BBM
equation on the whole line (see, e.g. \cite{ABS,BBM,Bona_Chen,M}).
Extensive results have also been
obtained on the global well-posedness for the initial- and
boundary-value problem of the BBM equation posed on the half-line
(see, e.g. \cite{BB,BCSZ1,BCSZ2,BD,BL1,BL2,BPS,BT,BW,HWY2,MMM}).
In particular, in the well-known articles \cite{BB,BBM}, the
existence of classical solutions and their continuous dependence
on the specified data were investigated. While current analytic
results for the multi-dimensional BBM or BBM-Burgers equations
only dealt with the existence of mild solutions on either the
whole space or bounded domains with homogeneous boundary data
(see, e.g. \cite{AG,GF,GW,K}). The results presented here allow
inhomogeneous boundary data, which correspond to the setup of a
wavemaker mounted at the end of a channel in laboratory
experiments. We emphasize that, our methods are also suitable for
the corresponding initial value problem. Therefore, Theorems
\ref{Classical} and \ref{stability} are the complete extensions of
the results in \cite{BB,BBM} to the multi-dimensional case.

\smallskip
The main difficulty in proving Theorem \ref{Classical} is from two sources. First, the
Green function for operator $\rI-\Delta$ in 2D is much more singular than the 1D case; and second, the inhomogeneous boundary data prevents us from obtaining a time-independent $H^1$ upper bound, which very much simplifies the process of global-in-time estimates. To overcome the difficulties, we introduce a new function
that assumes the homogeneous boundary data and rewrite the equation in an integral
form through the Green function of the elliptic operator. In addition, we use the bootstrapping technique to obtain the classical solution of \eqref{GBBMB_eq} instead of looking for the solution in classical spaces directly.

\smallskip
The rest of this paper is divided into six sections. The first five sections
deal with the case when $\nu_1 =0$ while the last section explains why the
results for $\nu_1=0$ can be extended to the case when $\nu_1>0$. Section
\ref{sec:alternative_formulation} introduces a new function that assumes
homogenous boundary data and converts (\ref{GBBMB_eq}) into an integral
formulation in terms of this new function. Section \ref{sec:1-lap_est}
presents preliminary regularity estimates for the operator $(I-\Delta)^{-1}$ in
the Sobolev space $H^2$ and in H\"{o}lder spaces. Section \ref{sec:local-in-time}
proves that (\ref{GBBMB_eq}) has a unique local (in time) classical solution.
We make use of the integral representation \eqref{rewrite}.
Section \ref{sec:global-in-time} establishes the global existence and uniqueness
of the local solution obtained in Section \ref{sec:local-in-time} by showing
global bounds for the solution in $H^2$ and in H\"{o}lder spaces. Section \ref{sec:stability} contains the continuous dependence results. The continuous
dependence of the solution on the initial data and the boundary data is proven
in two functional settings and the proof is lengthy. As aforementioned,
Section \ref{sec:nu1ne0} is devoted to the case when $\nu_1>0$.

\vskip .3in
\section{An alternative formulation}\label{sec:alternative_formulation}
\setcounter{equation}{0}

In this section we set $\nu_1=0$. The case when $\nu_1>0$ is handled in
Section \ref{sec:nu1ne0}. This section provides an integral
formulation of (\ref{GBBMB_eq}).

\smallskip
In order to apply the standard elliptic theory in the functional framework of Sobolev spaces, we shall rewrite equation (\ref{GBBMB_eq}) with homogeneous boundary data. This is achieved by setting $v(x,t)=u(x,t)-h(x_1,t)e^{-x_2}$, which satisfies
\begin{subequations}\label{GBBM2}
\begin{alignat}{2}
(\rI - \Delta) v_t + \div \left(\phi(v+he^{-x_2})\right) &= h_{x_1 x_1 t} e^{-x_2} \qquad&&\text{in}\quad\bbR^2_+ \times (0,T), \\
v &= \widetilde{g} &&\text{on}\quad\bbR^2_+ \times \{t=0\}, \\
v &= 0 &&\text{on}\quad \bdy\bbR^2_+ \times (0,T).
\end{alignat}
\end{subequations}
where
\begin{equation}\label{defn:gtilde}
\widetilde{g}(x) = g(x) - h(x_1,0) e^{-x_2}\,.
\end{equation}
Denoting $(\rI - \Delta)^{-1} f$ as the unique solution to the elliptic equation
\begin{subequations}\label{Dirichlet}
\begin{alignat}{2}
(\rI-\Delta)u &=f \qquad &&\text{in}\quad \Omega,\\
u &= 0\qquad &&\text{on}\quad \partial\Omega,
\end{alignat}
\end{subequations}
we can formally write the solution $v$ of \eqref{GBBM2} via the integral representation
\begin{equation}\label{IEv}
\aligned
v(x,t)&=\widetilde{g}(x)+(\rI-\Delta)^{-1}\left(\{ h_{x_1 x_1}(x_1,t)-h_{x_1 x_1}(x_1,0)\}e^{-x_2}\right)\\
&\quad -\int^t_0(\rI-\Delta)^{-1}\left\{\div\left(\phi(v+he^{-x_2})\right)\right\}d\tau\,.
\endaligned
\end{equation}
For short, we rewrite \eqref{IEv} as the form
\begin{equation}\label{rewrite}
v=\mathcal{A}v=\widetilde{g}+\mathcal{B}h+\mathcal{C}v,
\end{equation}
where, for $x\in\Bbb R^2_+$ and $t\ge 0$,
\begin{equation*}
\aligned
\mathcal{B}h(x,t)&:=(\rI-\Delta)^{-1}\left(\{ h_{x_1 x_1}(x_1,t)-h_{x_1 x_1}(x_1,0)\}e^{-x_2}\right),\\
\mathcal{C}v(x,t)&:=-\int^t_0(\rI-\Delta)^{-1}\left\{\div\left(\phi(v+he^{-x_2})\right)\right\}d\tau.
\endaligned
\end{equation*}

\vskip .3in
\section{Preliminary results}
\label{sec:1-lap_est}

This section specifies the functional spaces and provides two preliminary estimates on
the solutions to the elliptic equation \eqref{Dirichlet}.  In the rest of this paper, we write $C^k([0,T];H^\ell(\bbR^2_+))$ for the space
$$
\Big\{u: [0,T] \to H^\ell(\bbR^2_+)\,\Big|\, \lim_{t\to t_0} \sum_{j=0}^k \Big\|\frac{\p^k u}{\p t^k}(t) - \frac{\p^k u}{\p t^k}(t_0)\Big\|_{H^\ell(\bbR^2_+)} = 0 \quad\Forall t_0\in [0,T] \Big\}
$$
equipped with norm
$$
\|u\|_{C^k_t H^\ell_x} := \max_{t\in [0,T]}\sum_{j=0}^k \Big\|\frac{\p^k u}{\p t^k}(t)\Big\|_{H^\ell(\bbR^2_+)}\,.
$$
The spaces with the particular indices  $k=0,1$ and $\ell=1,2$ will
be frequently used. For the simplicity of notation, when $k=0$, we omit the super-index $0$, that is, $C([0,T];H^\ell(\bbR^2_+)) \equiv C^0([0,T];H^\ell(\bbR^2_+))$ and $\|\cdot\|_{C_t H^\ell_x} \equiv \|\cdot\|_{C^0_t H^\ell_x}$. We will also need
the space
$$
C([0,T];L^p(\bbR^2_+)) \equiv \Big\{u:[0,T] \to L^p(\bbR^2_+)\,\Big|\, \lim_{t\to t_0} \|u(t) - u(t_0)\|_{L^p(\bbR^2_+)} =0 \Big\}
$$
equipped with norm
$$
\|u\|_{C_t L^p_x} = \max_{t\in [0,T]} \|u(t)\|_{L^p(\bbR^2_+)}\,.
$$
Similar notation is used to define the space of the boundary data which is only defined on the real line $\bbR$. We introduce
$$
C([0,T];H^2(\bbR)) \equiv \Big\{h:[0,T] \to H^2(\bbR)\,\Big|\, \lim_{t\to t_0} \|h(t) - h(t_0)\|_{H^2(\bbR)} =0 \Big\}
$$
equipped with norm
$$
\|h\|_{C_t H^2_{x_1}} = \max_{t\in [0,T]} \|h(t)\|_{H^2(\bbR)}\,.
$$

To study the classical solutions, we let $C^{k,\alpha}(\Omega)$ denote the space of $k$-times classically differentiable functions whose $k$-th derivatives are \Holder\ continuous with exponent $\alpha$. The norm on $C^{k,\alpha}(\Omega)$ is given by
$$
\|u\|_{C^{k,\alpha}(\Omega)} = \sum_{j=0}^k \sup_{x\in \Omega} |D^j u(x)| + \sup_{x,y\in \Omega} \frac{|D^k u(x) - D^k u(y)|}{|x-y|^\alpha}\,,
$$
where $D^j u$ denotes the $j$-th classical derivative of $u$.

\smallskip
To deal with the integral representation (\ref{IEv}), we need some crucial estimates on the operator $(\rI - \Delta)^{-1}$. In particular, the bounds in the following propositions will be employed in the subsequent sections.

\begin{prop}\label{reg-1}
Assume $f\in L^2(\Omega)$. Then the Dirichlet problem \eqref{Dirichlet}
admits a unique solution $u\in H^2(\Omega)$.
Furthermore,
$$
\|u\|_{H^2(\Omega)}\le C\| f\|_{L^2(\Omega)}\,.
$$
\end{prop}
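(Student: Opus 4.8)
\emph{Proof sketch.} The plan is to obtain the solution in two stages: first a unique weak solution in $H^1_0(\Omega)$ via the Lax--Milgram theorem, then an upgrade to $H^2$ together with the stated quantitative estimate. Throughout I take $\Omega=\bbR^2_+$ as in the rest of this section.

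\emph{Step 1 (existence, uniqueness and the $H^1$ bound).} On the Hilbert space $H^1_0(\Omega)$ consider the bilinear form $a(u,v)=\int_\Omega (uv+\nabla u\cdot\nabla v)\,dx$. It is bounded, and coercive because $a(u,u)=\|u\|_{H^1(\Omega)}^2$, while $v\mapsto\int_\Omega fv\,dx$ is a bounded linear functional of norm $\le\|f\|_{L^2(\Omega)}$. Lax--Milgram then produces a unique $u\in H^1_0(\Omega)$ with $a(u,v)=\int_\Omega fv\,dx$ for every $v\in H^1_0(\Omega)$; this is by definition the weak solution of \eqref{Dirichlet}, and taking $v=u$ gives $\|u\|_{H^1(\Omega)}\le\|f\|_{L^2(\Omega)}$.

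\emph{Step 2 (the $H^2$ estimate via reflection).} Since $\bbR^2_+$ has a flat boundary, I would extend both $u$ and $f$ to all of $\bbR^2$ by odd reflection across $\{x_2=0\}$, i.e.\ $\tilde u(x_1,x_2)=-u(x_1,-x_2)$ and $\tilde f(x_1,x_2)=-f(x_1,-x_2)$ for $x_2<0$; because $u\in H^1_0(\bbR^2_+)$ one has $\tilde u\in H^1(\bbR^2)$ and $\tilde f\in L^2(\bbR^2)$ with $\|\tilde f\|_{L^2(\bbR^2)}^2=2\|f\|_{L^2(\bbR^2_+)}^2$. Writing an arbitrary $\varphi\in C^\infty_c(\bbR^2)$ as the sum of its even and odd parts in $x_2$, testing $\tilde u$ against the even part gives zero on both sides by symmetry, while testing against the odd part $\psi$ reduces --- after noting that $\psi$ vanishes on $\{x_2=0\}$, so $\psi|_{\bbR^2_+}\in H^1_0(\bbR^2_+)$ is admissible in Step 1 --- to twice the weak formulation on $\bbR^2_+$. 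Hence $\tilde u$ solves $(\rI-\Delta)\tilde u=\tilde f$ in $\mathcal{D}'(\bbR^2)$ with no singular contribution on the line $\{x_2=0\}$. On the whole plane the Fourier transform diagonalizes the operator: $\widehat{\tilde u}(\xi)=\widehat{\tilde f}(\xi)/(1+|\xi|^2)$, and since $|\xi|^{|\beta|}/(1+|\xi|^2)\le 1$ for every multi-index with $|\beta|\le 2$, Plancherel yields $\|\tilde u\|_{H^2(\bbR^2)}\le C\|\tilde f\|_{L^2(\bbR^2)}$. Restricting back to $\bbR^2_+$ shows $u\in H^2(\bbR^2_+)$ with $\|u\|_{H^2(\bbR^2_+)}\le C\|f\|_{L^2(\bbR^2_+)}$, which is the claim.

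An alternative route for Step 2 avoiding reflection is Nirenberg's tangential difference-quotient method: difference quotients in the $x_1$-direction are controlled in $H^1$ uniformly, giving $\partial_{x_1}\nabla u\in L^2$, and then $\partial_{x_2}^2 u=u-f+\partial_{x_1}^2 u\in L^2$ is read off from the equation. In either route the only delicate point is verifying that passing to the half-plane (respectively, the boundary difference quotients) produces no defect term on $\{x_2=0\}$; the odd symmetry is exactly what kills it, so I expect that symmetry check to be the only real --- and still mild --- obstacle.
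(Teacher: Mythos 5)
The paper states Proposition \ref{reg-1} without proof, treating it as standard elliptic theory for the operator $\rI-\Delta$ with Dirichlet data on the half-plane; your argument is a correct and complete justification of exactly that statement. Both steps are sound. In Step 1, note that coercivity of $a$ comes for free from the zeroth-order term ($a(u,u)=\|u\|_{H^1}^2$), so no \Poincare{} inequality is needed on the unbounded domain $\bbR^2_+$ --- you use this implicitly and it is the right observation. In Step 2, the one point that genuinely needs the hypothesis $u\in H^1_0(\bbR^2_+)$ is that the odd extension $\tilde u$ lies in $H^1(\bbR^2)$ with $\nabla\tilde u$ equal a.e.\ to the (reflected) gradient and no singular part on $\{x_2=0\}$; this is exactly the zero-trace characterization of $H^1_0$ on a half-space, and your even/odd decomposition of the test function correctly shows that the distributional identity $(\rI-\Delta)\tilde u=\tilde f$ holds across the boundary, after which Plancherel gives the $H^2$ bound with the constant $C$ independent of everything. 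The tangential difference-quotient alternative you mention would work equally well and generalizes to curved boundaries, whereas the reflection argument is the shortest route for the flat boundary actually used in the paper. No gaps.
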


If $f$ is instead in a H\"older space,
then we have the following H\"older's estimates for the solution of \eqref{Dirichlet}.

\begin{prop}\label{reg-2}
Assume that $\Omega\subset\Bbb R^2$ is a smooth domain. Assume that $f$
is in $C^{0,\alpha}_{\text{\rm loc}}(\Omega)\cap L^2(\Omega)$ for some $0<\alpha<1$. Then the solution $u$ of
the Dirichlet problem \eqref{Dirichlet} lies in $C^{2,\alpha}_{\text{\rm loc}}(\Omega)\cap H^2(\Omega)$. Furthermore,
for any compact subsets $\Omega_1$ and $\Omega_2$ of $\Omega$ with $\Omega_2\subset\subset\Omega_1$,
$$\| u\|_{C^{2,\alpha}(\Omega_2)}\le C(\| f\|_{C^{0,\alpha}(\Omega_1)}+\| f\|_{L^2(\Omega_1)}),$$
where $C>0$ depends only on the distance between $\Omega_2$ and $\partial\Omega_1$.
\end{prop}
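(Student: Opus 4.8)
The plan is to upgrade the $H^2$ solution produced by Proposition~\ref{reg-1} to a $C^{2,\alpha}_{\rm loc}$ solution by combining the two–dimensional Sobolev embedding with the classical interior Schauder estimate for the Laplacian; the only point that needs care is that the zeroth–order term of $\rI-\Delta$ contributes a copy of $u$ on the right–hand side, which must be absorbed into norms of the data already under control.

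First I would apply Proposition~\ref{reg-1}: there is a unique $u\in H^2(\Omega)$ solving \eqref{Dirichlet}, with $\|u\|_{H^2(\Omega)}\le C\|f\|_{L^2(\Omega)}$. Since $\Omega\subset\bbR^2$, the embedding $H^2\hookrightarrow C^{0,\beta}$ holds locally for every $\beta\in(0,1)$, in particular for $\beta=\alpha$; hence $u\in C^{0,\alpha}_{\rm loc}(\Omega)$, and for any $\Omega'\cptsubset\Omega$ one has $\|u\|_{C^{0,\alpha}(\Omega')}\le C(\Omega',\Omega)\|u\|_{H^2(\Omega)}$.

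Next, rewrite \eqref{Dirichlet} as the Poisson equation $-\Delta u=F$ in $\Omega$, where $F:=f-u$. By the previous step together with the hypothesis $f\in C^{0,\alpha}_{\rm loc}(\Omega)$, we get $F\in C^{0,\alpha}_{\rm loc}(\Omega)$. Since the assertion concerns only interior regularity, no boundary Schauder theory is needed: fixing intermediate open sets $\Omega_2\cptsubset\Omega_3\cptsubset\Omega_1$, the standard interior Schauder estimate for $-\Delta$ (e.g. Gilbarg--Trudinger, Theorem~6.2 and Corollary~6.3), applied on a finite cover of $\Omega_2$ by balls compactly contained in $\Omega_1$, gives $u\in C^{2,\alpha}(\Omega_2)$ together with
\begin{equation*}
\|u\|_{C^{2,\alpha}(\Omega_2)}\le C\big(\|u\|_{C^0(\Omega_3)}+\|F\|_{C^{0,\alpha}(\Omega_3)}\big),
\end{equation*}
with $C$ depending only on $\alpha$ and $\operatorname{dist}(\Omega_2,\partial\Omega_3)$; that this $C^{2,\alpha}$ solution coincides with the $H^2$ weak solution already found follows from uniqueness for \eqref{Dirichlet}.

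Finally, I would chain the estimates: bound $\|F\|_{C^{0,\alpha}(\Omega_3)}\le\|f\|_{C^{0,\alpha}(\Omega_1)}+\|u\|_{C^{0,\alpha}(\Omega_3)}$ and use Step~2 (Sobolev embedding, the $H^2$ bound of Proposition~\ref{reg-1}, and, where a sharper localization is desired, the interior $H^2$–estimate for \eqref{Dirichlet}) to control both $\|u\|_{C^{0,\alpha}(\Omega_3)}$ and $\|u\|_{C^0(\Omega_3)}$ by $\|f\|_{L^2(\Omega_1)}$, which yields the claimed inequality with $C$ depending only on $\operatorname{dist}(\Omega_2,\partial\Omega_1)$. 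The substance of the argument is entirely this absorption step; the remaining ingredients — elliptic existence/regularity from Proposition~\ref{reg-1}, the $2$D embedding $H^2\hookrightarrow C^{0,\alpha}_{\rm loc}$, and interior Schauder estimates for the Laplacian — are textbook, so I expect the only mild obstacle to be arranging the nested subdomains $\Omega_2\cptsubset\Omega_3\cptsubset\Omega_1$ so that every constant depends only on the separation of $\Omega_2$ from $\partial\Omega_1$.
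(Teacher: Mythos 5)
Your proposal is correct and follows essentially the same route as the paper: obtain $u\in H^2$ from Proposition~\ref{reg-1}, use the two--dimensional Sobolev embedding to place $u$ in $C^{0,\alpha}_{\rm loc}$, rewrite the equation as a Poisson equation with right--hand side $u-f\in C^{0,\alpha}_{\rm loc}$, apply interior Schauder estimates, and absorb the resulting $u$-norms back into $\|f\|_{L^2}$ via Proposition~\ref{reg-1}. The only cosmetic difference is the choice of reference within Gilbarg--Trudinger (the paper invokes Lemma~4.2 and Theorem~4.6 for the Poisson equation, you invoke the general interior Schauder estimates of Chapter~6), which does not change the substance of the argument.
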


\begin{proof}
By Proposition \ref{reg-1}, we have $u\in H^2(\Omega)$ and $\| u\|_{H^2(\Omega)}\le C\| f\|_{L^2(\Omega)}$.
Sobolev embedding theorem says that $u\in C^{0,\alpha}_{\rm loc}(\Omega)$ and
$\| u\|_{C^{0,\alpha}(\Omega')}\le C\| u\|_{H^2(\Omega')}$ for any compact subset $\Omega'$.
Thus, we get that $\Delta u=u-f\in C^{0,\alpha}_{\text{\rm loc}}(\Omega)$.
It follows from Lemma 4.2 and Theorem 4.6 in \cite{GT} that $u\in C^{2,\alpha}_{\text{\rm loc}}(\Omega)$
and, for any compact subsets $\Omega_1$ and $\Omega_2$ of $\Omega$ with $\Omega_2\subset\subset\Omega_1$, we have
$$\aligned
\| u\|_{C^{2,\alpha}(\Omega_2)}&\le C(\| f\|_{C^{0,\alpha}(\Omega_1)}+\| u\|_{C^{0,\alpha}(\Omega_1)}) \le C(\| f\|_{C^{0,\alpha}(\Omega_1)}+\| u\|_{H^2(\Omega_1)})\\
&\le C(\| f\|_{C^{0,\alpha}(\Omega_1)}+\| f\|_{L^2(\Omega_1)}),
\endaligned$$
where $C>0$ depends only on the distance between $\Omega_2$ and $\partial\Omega_1$.
\end{proof}

\vskip .3in

\section{Local-in-time existence}
\label{sec:local-in-time}
\setcounter{equation}{0}

This section proves that (\ref{GBBMB_eq}) has a unique local (in time) classical solution. We make use of the integral representation \eqref{rewrite}.
Due to the difficulty of applying the contraction mapping principle in the setting
of H\"{o}lder spaces, the proof is divided into two steps. The first step applies the
contraction mapping principle to \eqref{rewrite} in the setting of Sobolev spaces
to obtain a unique local solution. The second step obtains the desired regularity
of the local solution through a bootstrapping procedure.

\begin{lem}\label{H^2_S}
Let $(g,h) \in H^2(\Bbb R^2_+)\times C([0,T];H^2(\Bbb R))$, and $\phi$ satisfy the condition \eqref{phi-1}.
Then there is $S$ with $0<S\le T$, depending
only on $g$ and $h$, such that \eqref{rewrite} has a unique solution $v\in C([0,S];H^2(\Bbb R^2_+))$.
\end{lem}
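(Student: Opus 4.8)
The plan is to solve the fixed-point equation \eqref{rewrite} by the contraction mapping principle in the Banach space $X_S:=C([0,S];H^2(\bbR^2_+))$ for a suitably small $S\in(0,T]$, and then to promote uniqueness from a ball to all of $X_S$ by a Gronwall argument. First I would dispose of the two linear ``data'' contributions. The function $\widetilde g=g-h(x_1,0)e^{-x_2}$ lies in $H^2(\bbR^2_+)$, since $g\in H^2(\bbR^2_+)$, $h(\cdot,0)\in H^2(\bbR)$, and $e^{-x_2}\in H^2(0,\infty)$, so $h(x_1,0)e^{-x_2}\in H^2(\bbR^2_+)$. Likewise, because $h\in C([0,T];H^2(\bbR))$ forces $t\mapsto h_{x_1x_1}(\cdot,t)\in C([0,T];L^2(\bbR))$, the function $\{h_{x_1x_1}(x_1,t)-h_{x_1x_1}(x_1,0)\}e^{-x_2}$ belongs to $C([0,T];L^2(\bbR^2_+))$, and composing with the bounded linear operator $(\rI-\Delta)^{-1}\colon L^2(\bbR^2_+)\to H^2(\bbR^2_+)$ from Proposition \ref{reg-1} gives $\mathcal{B}h\in C([0,T];H^2(\bbR^2_+))$ with $\|\mathcal{B}h\|_{C_tH^2_x}\le C\|h\|_{C_tH^2_{x_1}}$. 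Thus $\widetilde g+\mathcal{B}h$ is a fixed element of $X_S$, and everything reduces to controlling the nonlinear operator $\mathcal{C}$.

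The core of the argument is a pair of Moser-type estimates for the integrand of $\mathcal{C}$. Write $w:=v+he^{-x_2}$, so $\|w(t)\|_{H^2}\le\|v(t)\|_{H^2}+C\|h\|_{C_tH^2_{x_1}}$, and use the chain rule $\div(\phi(w))=\phi'(w)\cdot\nabla w$. From \eqref{phi-1} one has $|\phi'(s)|\le|\phi'(0)|+\|\phi''\|_{L^\infty}|s|$, and since $n=2$ the Sobolev embedding $H^2(\bbR^2_+)\hookrightarrow L^\infty(\bbR^2_+)$ gives $\|\phi'(w)\|_{L^\infty}\le C(1+\|w\|_{H^2})$; hence
\[
\big\|\div(\phi(w))\big\|_{L^2(\bbR^2_+)}\le\|\phi'(w)\|_{L^\infty}\|\nabla w\|_{L^2}\le C\big(1+\|w\|_{H^2}\big)\|w\|_{H^2}\,.
\]
For two inputs $v_1,v_2$, setting $w_i=v_i+he^{-x_2}$ (so that $w_1-w_2=v_1-v_2$) and splitting
\[
\div(\phi(w_1))-\div(\phi(w_2))=\phi'(w_1)\cdot\nabla(w_1-w_2)+\big(\phi'(w_1)-\phi'(w_2)\big)\cdot\nabla w_2\,,
\]
the first term is bounded in $L^2$ by $\|\phi'(w_1)\|_{L^\infty}\|\nabla(w_1-w_2)\|_{L^2}$, and the second, via $|\phi'(w_1)-\phi'(w_2)|\le\|\phi''\|_{L^\infty}|w_1-w_2|$ together with $H^1(\bbR^2_+)\hookrightarrow L^4(\bbR^2_+)$, by $C\|w_1-w_2\|_{L^4}\|\nabla w_2\|_{L^4}$; altogether
\[
\big\|\div(\phi(w_1))-\div(\phi(w_2))\big\|_{L^2(\bbR^2_+)}\le C\big(1+\|w_1\|_{H^2}+\|w_2\|_{H^2}\big)\|v_1-v_2\|_{H^2}\,.
\]
Composing both bounds with the two-derivative gain of Proposition \ref{reg-1} and integrating in $\tau$ converts them into estimates for $\mathcal{C}$ on $X_S$; the same integral bound taken over a subinterval $[t_0,t]$ shows $\mathcal{C}v\in C([0,S];H^2(\bbR^2_+))$, so $\mathcal{A}\colon X_S\to X_S$.

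To close the argument, fix $R:=2\big(\|\widetilde g\|_{H^2}+\|\mathcal{B}h\|_{C_tH^2_x}\big)$ and $M:=R+C\|h\|_{C_tH^2_{x_1}}$. For $v$ in the closed ball $\overline{B_R}\subset X_S$ the first estimate yields $\|\mathcal{C}v\|_{X_S}\le CS(1+M)M$, and for $v_1,v_2\in\overline{B_R}$ the second yields $\|\mathcal{C}v_1-\mathcal{C}v_2\|_{X_S}\le CS(1+M)\|v_1-v_2\|_{X_S}$. Choosing $S\in(0,T]$ so small that $CS(1+M)M\le R/2$ and $CS(1+M)\le\tfrac12$ makes $\mathcal{A}v=\widetilde g+\mathcal{B}h+\mathcal{C}v$ a $\tfrac12$-contraction of $\overline{B_R}$ into itself, and the Banach fixed-point theorem gives a unique fixed point $v\in\overline{B_R}$, that is, a solution of \eqref{rewrite}. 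Uniqueness in all of $X_S$ then follows from Gronwall: if $v_1,v_2\in X_S$ both solve \eqref{rewrite}, then with $\rho(t):=\|v_1(t)-v_2(t)\|_{H^2}$ and $M':=\|v_1\|_{X_S}+\|v_2\|_{X_S}+C\|h\|_{C_tH^2_{x_1}}$ the difference estimate gives $\rho(t)\le C(1+M')\int_0^t\rho(\tau)\,d\tau$, whence $\rho\equiv0$. I expect no genuine obstacle: the only real work is the two product estimates, and the dimension $n=2$ is exactly what makes them work, through $H^2\hookrightarrow L^\infty$; although $\div$ costs one derivative, $(\rI-\Delta)^{-1}$ returns two and the $\tau$-integral supplies the smallness in $S$, so the iteration never leaves $H^2$. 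The delicate part is purely bookkeeping---lining up the growth hypotheses on $\phi$ with the correct Sobolev products.
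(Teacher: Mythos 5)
Your proposal follows essentially the same route as the paper: a contraction mapping argument for $\mathcal{A}v=\widetilde g+\mathcal{B}h+\mathcal{C}v$ on a ball in $C([0,S];H^2(\bbR^2_+))$, with the self-map and Lipschitz bounds for $\mathcal{C}$ obtained from exactly the same splitting of $\div(\phi(w_1))-\div(\phi(w_2))$, the $L^\infty$ and $L^4$ Sobolev embeddings in two dimensions, and the two-derivative gain of Proposition \ref{reg-1}. The one point where you go slightly beyond the paper is the closing Gronwall step, which upgrades uniqueness from the ball $\overline{B_R}$ to all of $C([0,S];H^2(\bbR^2_+))$ --- a worthwhile addition, since the paper's proof as written only yields uniqueness within the ball $\|v\|_{C_tH^2_x}\le R$.
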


\begin{proof}
This local existence and uniqueness result is proven through the contraction mapping
principle. More precisely, we show that $\mathcal{A}$ defined in \eqref{rewrite}
is a contraction map from $B(0,R)\subset C([0,S];H^2(\Bbb R^2_+))$ to itself, where
$B(0,R)$ denotes the closed ball centered at $0$ with radius $R$ in $C([0,S];H^2(\Bbb R^2_+))$. $S$ and $R$ will be specified later in the proof. It follows from \eqref{phi-1}, \eqref{rewrite}, Proposition \ref{reg-1} and the mean value theorem
that, for $v,w\in B(0,R)$,
\begin{equation}\label{estimate-Av-1}
\aligned
\|\mathcal{A}v\|_{C_t H^2_x}&\le \| g\|_{H^2}+C\| h\|_{C_t H^2_{x_1}}
+CS\|\div(\phi(v+he^{-x_2}))\|_{C_t L^2_x}\\
&\le \| g\|_{H^2}+C\| h\|_{C_t H^2_{x_1}}\\
&\quad +CS\|\phi'(v+he^{-x_2})\|_{C_t L^\infty_x}\|\nabla(v+he^{-x_2})\|_{C_t L^2_x}\\
&\le C_1+C_2 S(1+R)R
\endaligned
\end{equation}
and
\begin{equation}\label{estimate-Av-2}
\aligned
&\|\mathcal{A}v-\mathcal{A}w\|^2_{C_t H^2_x} \le CS^2\|\div(\phi(v+he^{-x_2}))-\div(\phi(w+he^{-x_2}))\|^2_{C_t L^2_x}\\
&\quad\le CS^2\sum^2_{j=1}\int_{\Bbb R^2_+}\left(\left|\phi'_j(v+he^{-x_2})(v-w)_{x_j}\right|^2\right.\\
&\qquad\qquad\quad +\left.\left|\left(\phi'_j(v+he^{-x_2})-\phi'_j(w+he^{-x_2})\right)(w+he^{-x_2})_{x_j}\right|^2\right)dx\\
&\quad\le CS^2\left\{\|\phi'(v+he^{-x_2})\|^2_{C_t L^\infty_x}\| v-w\|^2_{C_t H^1_x}\right.\\
&\qquad\qquad\quad +\left.\|\phi''(\overline{v}+he^{-x_2})\|^2_{C_t L^\infty_x}
 \|\nabla(w+he^{-x_2})\|^2_{C_t L^4_x}\| v-w\|^2_{C_t L^4_x}\right\}\\
&\quad\le C_2 S^2(1+R)^2 \| v-w\|^2_{C_t H^2_x},
\endaligned
\end{equation}
where $\overline{v}$ lies between the line segment joining $v$ and $w$, $C_1$ is a constant depending on
$\| g\|_{H^2}$ and $\| h\|_{C_t H^2_{x_1}}$, and $C_2$ is a constant depending on $\| h\|_{C_t H^2_{x_1}}$.
Note that \eqref{estimate-Av-2} implies $\mathcal{A}$ is a continuous map of $C([0,S];H^2(\Bbb R^2_+))$ to itself.
According to \eqref{estimate-Av-1}, $\mathcal{A}$ maps $B(0,R)$ onto itself if
\begin{equation}\label{R}
R\ge C_1+C_2 S(1+R)R.
\end{equation}
Hence, by \eqref{estimate-Av-2}, $\mathcal{A}$ is a contraction mapping of this ball if $C_2 S(1+R)<1$.
These conditions will be met if we take $R=2C_1$ and find a positive value $S>0$ small enough such that
\begin{equation}\label{contraction}
C_2 S(1+2C_1)\le\frac{1}{2}.
\end{equation}
Now, let
\begin{equation*}
v_0(x,t)=\widetilde{g}(x)+\mathcal{B}h(x,t)
\end{equation*}
and
\begin{equation*}
v_n(x,t)=\mathcal{A}v_{n-1}(x,t)=v_0(x,t)+\mathcal{C}v_{n-1}(x,t)\qquad\text{for}\ n\ge 1.
\end{equation*}
The contraction mapping principle gives that the sequence $v_n(x,t)$ converges in $C([0,S];H^2(\Bbb R^2_+))$
to the unique solution $v$ of \eqref{rewrite} in the ball $\| v\|_{C_t H^2_x}\le R$.
\end{proof}

\smallskip
If the initial data $g$  and the boundary data $h$ are also H\"{o}lder, then the
corresponding solution can also be shown to be H\"{o}lder. This is achieved through
the Sobolev embeddings and a bootstrapping procedure.

\begin{lem}\label{regularity}
Assume $g\in C^{2,\alpha}_{\text{\rm loc}}(\Bbb R^2_+)\cap H^2(\Bbb R^2_+)$ and $h\in C^1([0,T];C^{2,\alpha}_{\text{\rm loc}}(\Bbb R)\cap H^2(\Bbb R))$ for some $0<\alpha<1$. Let $\phi$ satisfy the condition \eqref{phi-1}. Then any solution $v\in C([0,T]; H^2(\Bbb R^2_+))$ of \eqref{rewrite} actually belongs to $C([0,T]; H^2(\Bbb R^2_+))\cap C^1([0,T]; C^{2,\alpha}_{\text{\rm loc}}(\Bbb R^2_+))$.
\end{lem}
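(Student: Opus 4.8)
The plan is to upgrade the regularity of a given Sobolev solution $v\in C([0,T];H^2(\bbR^2_+))$ of \eqref{rewrite} by iterating the elliptic regularity estimate of Proposition \ref{reg-2} against the integral formula \eqref{rewrite}. First I would observe that, since $v(t)\in H^2(\bbR^2_+)$ for each $t$, the two-dimensional Sobolev embedding $H^2(\bbR^2_+)\hookrightarrow C^{0,\alpha}_{\rm loc}(\bbR^2_+)$ (for every $\alpha\in(0,1)$) gives $v(t)\in C^{0,\alpha}_{\rm loc}$ with a bound controlled by $\|v\|_{C_tH^2_x}$. Combined with the hypotheses $g\in C^{2,\alpha}_{\rm loc}$ and $h\in C^1([0,T];C^{2,\alpha}_{\rm loc}(\bbR))$, and the smoothness of $x_2\mapsto e^{-x_2}$, this yields $\widetilde g\in C^{2,\alpha}_{\rm loc}$ and $\mathcal Bh(\cdot,t)\in C^{2,\alpha}_{\rm loc}$ (apply Proposition \ref{reg-2} to the elliptic problem with right-hand side $\{h_{x_1x_1}(x_1,t)-h_{x_1x_1}(x_1,0)\}e^{-x_2}$, which is $C^{0,\alpha}_{\rm loc}\cap L^2$ by assumption on $h$).

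Next I would treat the nonlinear term $\mathcal Cv$. Since $\phi\in C^2$ and $v(\tau)+h e^{-x_2}\in C^{0,\alpha}_{\rm loc}$, one has $\phi(v+he^{-x_2})\in C^{0,\alpha}_{\rm loc}$, but a naive differentiation to form $\div(\phi(\cdot))$ only gives an $L^2$ function, not a \Holder\ one, because $v$ is merely $H^2$. So the first pass through Proposition \ref{reg-1}/\ref{reg-2} applied term-by-term to \eqref{rewrite} gives $v(t)\in C^{2,\alpha}_{\rm loc}$ coming from $\widetilde g+\mathcal Bh$, while $\mathcal Cv(t)$ is a priori only known to be in $H^2$; however $\mathcal Cv(t)\in H^2\hookrightarrow C^{0,\alpha}_{\rm loc}$, hence $v(t)\in C^{0,\alpha}_{\rm loc}$ — which we already knew. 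The genuine bootstrap is: once we know $v(t)\in C^{1,\alpha}_{\rm loc}$, then $\nabla v(t)\in C^{0,\alpha}_{\rm loc}$, so $\div(\phi(v+he^{-x_2}))=\phi'(v+he^{-x_2})\cdot\nabla(v+he^{-x_2})\in C^{0,\alpha}_{\rm loc}$, and then Proposition \ref{reg-2} forces $(\rI-\Delta)^{-1}\{\div(\phi(\cdots))\}\in C^{2,\alpha}_{\rm loc}$, so after integrating in $\tau$ we get $\mathcal Cv(t)\in C^{2,\alpha}_{\rm loc}$ and therefore $v(t)\in C^{2,\alpha}_{\rm loc}$. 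Thus the real task is to produce the intermediate gain $v(t)\in C^{1,\alpha}_{\rm loc}$ from $v(t)\in H^2$; I would get this by noting $H^2(\bbR^2_+)\hookrightarrow W^{1,p}_{\rm loc}$ for all $p<\infty$, so $\nabla v\in L^p_{\rm loc}$, hence $\div(\phi(v+he^{-x_2}))\in L^p_{\rm loc}\cap L^2$, and elliptic $L^p$-estimates (Calderón–Zygmund, as in \cite{GT}) give $(\rI-\Delta)^{-1}$ of this term in $W^{2,p}_{\rm loc}\hookrightarrow C^{1,\alpha}_{\rm loc}$ for $p>2$; together with the $C^{2,\alpha}_{\rm loc}$ pieces $\widetilde g+\mathcal Bh$, this yields $v(t)\in C^{1,\alpha}_{\rm loc}$, closing the loop.

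Finally I would address the time regularity, i.e. that $v\in C^1([0,T];C^{2,\alpha}_{\rm loc}(\bbR^2_+))$. Differentiating \eqref{rewrite} in $t$ gives $v_t=\mathcal Bh_t+ (\mathcal Cv)_t$, where $(\mathcal Cv)_t=-(\rI-\Delta)^{-1}\{\div(\phi(v+he^{-x_2}))\}$ evaluated at time $t$ (the fundamental theorem of calculus in $\tau$), and $\mathcal Bh_t(\cdot,t)=(\rI-\Delta)^{-1}(h_{x_1x_1t}(x_1,t)e^{-x_2})$, which lies in $C^{2,\alpha}_{\rm loc}$ because $h\in C^1([0,T];C^{2,\alpha}_{\rm loc})$. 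By the spatial bootstrap just established, $\div(\phi(v(t)+he^{-x_2}))\in C^{0,\alpha}_{\rm loc}$, so $v_t(t)\in C^{2,\alpha}_{\rm loc}$ with a bound continuous in $t$; continuity of $t\mapsto v_t(t)$ in $C^{2,\alpha}_{\rm loc}$ follows from continuity of $t\mapsto v(t)$ in $H^2$ (hence in $C^{1,\alpha}_{\rm loc}$), continuity of $h,h_t$, and the continuity of the linear map $(\rI-\Delta)^{-1}$ from $C^{0,\alpha}_{\rm loc}\cap L^2$ to $C^{2,\alpha}_{\rm loc}$ afforded by Proposition \ref{reg-2}. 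The main obstacle, and the step deserving the most care, is precisely the half-derivative gain from $H^2$ to $C^{1,\alpha}_{\rm loc}$ that makes $\nabla v$ continuous enough to put the nonlinearity into a \Holder\ class; everything after that is a clean iteration of Proposition \ref{reg-2}, and the compact-subset/interior nature of all the \Holder\ estimates means one must be a little careful to state the bootstrap on an exhausting family of compact sets $\Omega_2\cptsubset\Omega_1\cptsubset\bbR^2_+$ rather than globally.
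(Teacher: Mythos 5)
Your bootstrap has the same overall architecture as the paper's proof: both arguments hinge on first gaining the intermediate regularity $v(t)\in C^{1,\alpha}_{\rm loc}$, after which $\div(\phi(v+he^{-x_2}))=\phi'(v+he^{-x_2})\cdot\nabla(v+he^{-x_2})\in C^{0,\alpha}_{\rm loc}$ and Proposition \ref{reg-2} closes the loop. Where you differ is the mechanism for that intermediate gain. You pass through $L^p$: $\nabla v\in H^1\hookrightarrow L^p$ for all $p<\infty$, so the nonlinearity lies in $L^p_{\rm loc}\cap L^2$ and Calder\'on--Zygmund interior estimates put the elliptic solve in $W^{2,p}_{\rm loc}\hookrightarrow C^{1,\alpha}_{\rm loc}$ for $p>2$. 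The paper instead observes that the nonlinearity is a full derivative better than $L^2$: since $v+he^{-x_2}\in H^2\hookrightarrow L^\infty\cap W^{1,4}$ and $\phi''$ is bounded, $\div(\phi(v+he^{-x_2}))\in C([0,T];H^1(\bbR^2_+))$, so the $L^2$-based elliptic estimate (Proposition \ref{reg-1} in its obvious $H^1\to H^3$ extension) gives the solve in $C([0,T];H^3)\hookrightarrow C([0,T];C^{1,\alpha})$. The paper's route stays entirely within the $L^2$ framework already set up in the preliminaries and, because it is phrased in $C([0,T];\cdot)$ spaces throughout, delivers time-continuity for free; it then uses the identity $v(x,t)=\widetilde g(x)+\int_0^t v_\tau\,d\tau$ to convert regularity of $v_t$ into regularity of $v$. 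Your route needs the extra $W^{2,p}$ machinery, which is not among the stated propositions but is of course standard.

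One sub-claim you should repair: you justify continuity of $t\mapsto v_t(t)$ in $C^{2,\alpha}_{\rm loc}$ by asserting that $t\mapsto v(t)$ is continuous ``in $H^2$, hence in $C^{1,\alpha}_{\rm loc}$.'' In two dimensions $H^2$ embeds only into $C^{0,\alpha}$, not into $C^{1,\alpha}$, so that implication is false as stated. The fix is already contained in your own argument: $\tau\mapsto\div(\phi(v(\tau)+h(\tau)e^{-x_2}))$ is continuous into $L^p_{\rm loc}$ (or, as in the paper, into $H^1$), hence the elliptic solve is continuous into $W^{2,p}_{\rm loc}\hookrightarrow C^{1,\alpha}_{\rm loc}$ (respectively $H^3\hookrightarrow C^{1,\alpha}$), and therefore $t\mapsto v(t)$ is continuous (indeed $C^1$) into $C^{1,\alpha}_{\rm loc}$ by the integral representation; only after this step is the continuity of the nonlinearity into $C^{0,\alpha}_{\rm loc}$, and hence of $v_t$ into $C^{2,\alpha}_{\rm loc}$, actually available.
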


\begin{proof}
Since $v\in C([0,T];H^2(\Bbb R^2_+))$ and $h\in C^1([0,T];H^2(\Bbb R))$, we have
\begin{equation*}
\div(\phi(v+he^{-x_2}))\in C([0,T]; H^1(\Bbb R^2_+)).
\end{equation*}
Proposition \ref{reg-1} gives
\begin{equation}\label{elliptic}
\aligned
(\rI-\Delta)^{-1}\{\div(\phi(v+he^{-x_2}))\}
&\in C([0,T]; H^3(\Bbb R^2_+))\\
&\hookrightarrow C([0,T];C^{1,\alpha}(\Bbb R^2_+)).
\endaligned
\end{equation}
On the other hand, $h\in C^1([0,T];C^{2,\alpha}_{\text{\rm loc}}(\Bbb R))$ implies
$$h_{x_1 x_1 t}e^{-x_2}\in C([0,T];C^{0,\alpha}_{\text{\rm loc}}(\Bbb R^2_+)).$$
Proposition \ref{reg-2} then yields
\begin{equation}\label{holder}
(\rI-\Delta)^{-1}\{ h_{x_1 x_1 t}e^{-x_2}\}\in C([0,T];C^{2,\alpha}_{\text{loc}}(\Bbb R^2_+)).
\end{equation}
In view of \eqref{elliptic} and \eqref{holder}, we obtain
\begin{equation*}
v_t=(\rI-\Delta)^{-1}\{ h_{x_1 x_1 t}e^{-x_2}-\div(\phi(v+he^{-x_2}))\}
\in C([0,T];C^{1,\alpha}_{\text{loc}}(\Bbb R^2_+)),
\end{equation*}
which implies that
\begin{equation}\label{v}
\aligned
v(x,t)&=\widetilde{g}(x)+\int^t_0 v_\tau(x,\tau) d\tau \\
&= g(x)-h(x_1,0)e^{-x_2}+\int^t_0 v_\tau(x,\tau) d\tau \in C^1([0,T];C^{1,\alpha}_{\text{loc}}(\Bbb R^2_+)).
\endaligned
\end{equation}
Using \eqref{v} and Proposition \ref{reg-2}, we have
\begin{equation*}
v_t=(\rI-\Delta)^{-1}\{ h_{x_1 x_1 t}e^{-x_2}-\div(\phi(v+he^{-x_2}))\}\in C([0,T];C^{2,\alpha}_{\text{loc}}(\Bbb R^2_+)),
\end{equation*}
and hence $v\in C^1([0,T];C^{2,\alpha}_{\text{loc}}(\Bbb R^2_+))$.
\end{proof}

\vskip .3in
\section{Global-in-time existence}
\label{sec:global-in-time}
\setcounter{equation}{0}

This section shows that the local (in time) solution obtained in the previous section
can be extended into a global one. This is achieved by establishing a global
bound for $\|v(t)\|_{H^2}$ under the condition that the flux $\phi$ obeys suitable
growth condition. We start with a global $H^1$-bound.

\begin{lem}\label{H^1 estimates}
Suppose $(g,h) \in H^2(\Bbb R^2_+)\times C^1([0,S];H^2(\Bbb R))$, and $\phi$ satisfy the condition \eqref{phi-1}.
Then the solution $v$ of \eqref{rewrite} obtained in Lemma {\rm\ref{H^2_S}} satisfies the estimates
\begin{equation}\label{estimate1}
\begin{array}{l}
\displaystyle{} \| v\|^2_{H^1}\le \| g\|^2_{H^1}+CS\| h\|^2_{C^1_t H^2_{x_1}}(1+\| h\|_{C^1_t H^2_{x_1}}) \vspace{.1cm} \\
\displaystyle{} \hspace{45pt} +C(1+\| h\|_{C^1_t H^2_{x_1}})\int^t_0\| v\|^2_{H^1}ds
\end{array}
\end{equation}
and
\begin{equation}\label{estimate2}
\| v\|_{H^1}\le \Big[\| g\|^2_{H^1}+CS\| h\|^2_{C^1_t H^2_{x_1}}(1+\| h\|_{C^1_t H^2_{x_1}})\Big]^{1/2}e^{CS(1+\| h\|_{C^1_t H^2_{x_1}})},
\end{equation}
where $C>0$ is a constant depending only on $\phi$.
\end{lem}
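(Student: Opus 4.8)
The plan is to establish the differential inequality behind \eqref{estimate1} by a weighted $L^2$ (i.e. $H^1$) energy estimate for the reformulated system \eqref{GBBM2}, and then deduce \eqref{estimate1}--\eqref{estimate2} by integration in time and Gr\"onwall's inequality. Throughout write $w:=v+he^{-x_2}$. First one checks that the local solution of Lemma \ref{H^2_S} is regular enough to legitimize the computation: from the representation \eqref{IEv} and Proposition \ref{reg-1} one gets $v_t\in C([0,S];H^2(\bbR^2_+))$ (indeed $h_{x_1x_1t}e^{-x_2}\in C([0,S];L^2)$ and $\div(\phi(w))\in C([0,S];H^1)$), hence $v\in C^1([0,S];H^2(\bbR^2_+))$ and $(\ref{GBBM2}a)$ holds in $L^2(\bbR^2_+)$ for every $t$. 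Testing $(\ref{GBBM2}a)$ against $v(t)$ and integrating over $\bbR^2_+$, the term $\int(\rI-\Delta)v_t\,v$ becomes $\tfrac12\tfrac{d}{dt}\|v\|_{H^1}^2$ after integrating the $-\Delta v_t$ term by parts, and the divergence term becomes $\int\phi(w)\cdot\nabla v$ after one integration by parts; in both the boundary integral vanishes because $v$, and therefore $v_t$, vanishes on $\partial\bbR^2_+$. This yields the energy identity
\begin{equation*}
\tfrac12\,\tfrac{d}{dt}\|v\|_{H^1}^2 \;=\; \int_{\bbR^2_+}\phi(w)\cdot\nabla v\,dx \;+\; \int_{\bbR^2_+} h_{x_1x_1t}\,e^{-x_2}\,v\,dx\,.
\end{equation*}

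The one delicate estimate is the flux term, and it is where the hypothesis \eqref{phi-1} enters. From $\phi(0)=0$, $\phi\in C^2$ and $\|\phi''\|_{L^\infty}\le C$ one has $|\phi(s)|\le C(|s|+s^2)$, and $|\Phi(s)|\le C(s^2+|s|^3)$ for the antiderivative $\Phi=(\Phi_1,\Phi_2)$ of $\phi$ with $\Phi(0)=0$. A crude bound of $\int\phi(w)\cdot\nabla v$ by $|\phi(w)|\,|\nabla v|$ would leave a cubic term $\|v\|_{H^1}^3$ that cannot be absorbed in a Gr\"onwall scheme; the remedy is to integrate by parts once more, writing $\nabla v=\nabla w-\nabla(he^{-x_2})$, so that
\begin{equation*}
\int_{\bbR^2_+}\phi(w)\cdot\nabla v\,dx \;=\; \int_{\bbR^2_+}\div\!\big(\Phi(w)\big)\,dx \;-\; \int_{\bbR^2_+}\phi(w)\cdot\nabla(he^{-x_2})\,dx\,.
\end{equation*}
Since $w\in H^2(\bbR^2_+)$ has boundary trace $h(\cdot,t)$ on $\partial\bbR^2_+$, the divergence theorem turns the first integral into $-\int_{\bbR}\Phi_2(h(x_1,t))\,dx_1$, which by $|\Phi_2(s)|\le C(s^2+|s|^3)$ and $H^1(\bbR)\hookrightarrow L^3(\bbR)$ is bounded by $C\|h\|_{C^1_t H^2_{x_1}}^2(1+\|h\|_{C^1_t H^2_{x_1}})$. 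The second integral is estimated using $|\nabla(he^{-x_2})|\le C(|h_{x_1}|+|h|)e^{-x_2}$, the elementary bound $\|h\,e^{-x_2}\|_{L^p(\bbR^2_+)}\le C_p\|h\|_{L^p(\bbR)}$, Cauchy--Schwarz, and the two-dimensional Ladyzhenskaya inequality $\|v\|_{L^4}^2\le C\|v\|_{L^2}\|\nabla v\|_{L^2}$ (available since $v$ vanishes on $\partial\bbR^2_+$); the outcome is a bound of the form $C(1+\|h\|_{C^1_t H^2_{x_1}})\|v\|_{H^1}^2+C\|h\|_{C^1_t H^2_{x_1}}^2(1+\|h\|_{C^1_t H^2_{x_1}})$, where crucially the $v$-dependence is only quadratic. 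The forcing term in the energy identity is immediate: $\big|\int h_{x_1x_1t}e^{-x_2}v\big|\le\|h_{x_1x_1t}e^{-x_2}\|_{L^2}\|v\|_{L^2}\le\tfrac12\|v\|_{H^1}^2+C\|h\|_{C^1_t H^2_{x_1}}^2$.

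Feeding these bounds into the energy identity yields, with $C$ depending only on $\phi$,
\begin{equation*}
\tfrac{d}{dt}\|v\|_{H^1}^2 \;\le\; C\big(1+\|h\|_{C^1_t H^2_{x_1}}\big)\|v\|_{H^1}^2 \;+\; C\|h\|_{C^1_t H^2_{x_1}}^2\big(1+\|h\|_{C^1_t H^2_{x_1}}\big)\,.
\end{equation*}
Integrating over $[0,t]\subseteq[0,S]$ and using $v(0)=\widetilde g=g-h(\cdot,0)e^{-x_2}$, so that $\|v(0)\|_{H^1}^2\le C(\|g\|_{H^1}^2+\|h\|_{C^1_t H^2_{x_1}}^2)$, gives \eqref{estimate1} after absorbing constants; \eqref{estimate2} then follows by applying Gr\"onwall's inequality to \eqref{estimate1}. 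I expect the main obstacle to be precisely the flux estimate --- in particular recognizing the integration-by-parts device that trades the dangerous cubic term for a harmless boundary term in $h$ --- together with the (routine but necessary) verification that the divergence theorem applies, i.e. that $\Phi(w)\in W^{1,1}(\bbR^2_+)$ with trace $\Phi(h(\cdot,t))$ on $\partial\bbR^2_+$, which follows from $v\in H^2(\bbR^2_+)$, $h\in H^2(\bbR)$ and the cubic growth of $\Phi$.
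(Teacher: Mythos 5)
Your proposal is correct and follows essentially the same route as the paper: multiply (\ref{GBBM2}a) by $v$, kill the cubic term by writing $\phi(v+he^{-x_2})\cdot\nabla(v+he^{-x_2})=\div(\Phi(v+he^{-x_2}))$ with $\Phi'=\phi$, $\Phi(0)=0$, reduce it to a boundary integral of $\Phi(h)$ controlled via the growth bounds coming from \eqref{phi-1}, estimate the remaining terms by H\"older/Sobolev, and close with Gr\"onwall. Your added remarks (justifying $v_t\in C([0,S];H^2)$ so the identity is legitimate, and noting $v(0)=\widetilde g$ rather than $g$) are minor refinements of the same argument.
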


\begin{proof}
Multiplying (\ref{GBBM2}a) by $v$ and integrating over $\Bbb R^2_+$, we get
\begin{equation}\label{H^1 1}
\aligned
\frac{1}{2}\frac{d}{dt}\int_{\Bbb R^2_+}(v^2+|\nabla v|^2)\, dx
&=\int_{\Bbb R^2_+}\{ h_{x_1 x_1 t}e^{-x_2}-\div(\phi(v+he^{-x_2}))\} v\, dx\\
&=\int_{\Bbb R^2_+}h_{x_1 x_1 t}e^{-x_2} v\, dx-\int_{\Bbb R^2_+}\div(v\phi(v+he^{-x_2}))\, dx\\
&\quad +\int_{\Bbb R^2_+}\phi(v+he^{-x_2})\cdot\nabla v\, dx.
\endaligned
\end{equation}
Since $v=0$ on $\partial\Bbb R^2_+$,
\begin{equation}\label{H^1 2}
\int_{\Bbb R^2_+}\div(v\phi(v+he^{-x_2}))\, dx=\int_{\partial\Bbb R^2_+}v\phi(v+he^{-x_2})\cdot n\, dS=0.
\end{equation}
Now let $\Phi\in C^1(\Bbb R;\Bbb R^2)$ satisfy $\Phi'=\phi$ and $\Phi(0)=0$. Then
\begin{equation}\label{H^1 3}
\aligned
&\int_{\Bbb R^2_+}\phi(v+he^{-x_2})\cdot\nabla v\, dx\\
&\qquad =\int_{\Bbb R^2_+}\phi(v+he^{-x_2})\cdot\nabla(v+he^{-x_2})\, dx
 -\int_{\Bbb R^2_+}\phi(v+he^{-x_2})\cdot\nabla(he^{-x_2})\, dx\\
&\qquad =\int_{\Bbb R^2_+}\div(\Phi(v+he^{-x_2}))\, dx
 -\int_{\Bbb R^2_+}\phi(v+he^{-x_2})\cdot\nabla(he^{-x_2})\, dx\\
&\qquad =\int_{\partial\Bbb R^2_+}\Phi(h)\cdot n\, dS
 -\int_{\Bbb R^2_+}\phi(v+he^{-x_2})\cdot\nabla(he^{-x_2})\, dx.
\endaligned
\end{equation}
By the Sobolev embedding $H^2(\Bbb R)\hookrightarrow L^q(\Bbb R)$ for all $q\ge 2$,
$$h\in C^1([0,S];H^2(\Bbb R))\subset C^1([0,S];L^q(\Bbb R)).$$
Employing the mean value theorem together with \eqref{phi-1} and the properties of $\Phi$, we obtain
$$
|\Phi(h)-\Phi(0)|\le C(|h|^2+|h|^3).
$$
Hence
\begin{equation}\label{H^1 4}
\aligned
\bigg|\int_{\partial\Bbb R^2_+}\Phi(h)\cdot n\, dS\bigg|
&=\int_{\partial\Bbb R^2_+}|\Phi(h)-\Phi(0)|\, dS\\
&\le C(\| h\|^2_{C_t L^2_{x_1}}+\| h\|^3_{C_t L^3_{x_1}})\\
&\le C(\| h\|^2_{C^1_t H^2_{x_1}}+\| h\|^3_{C^1_t H^2_{x_1}}).
\endaligned
\end{equation}
Applying the mean value theorem and \eqref{phi-1} again, we have
\begin{equation*}
|\phi(v+he^{-x_2})|\le C(|v+he^{-x_2}|+|v+he^{-x_2}|^2)\,.
\end{equation*}
As a consequence,
\begin{equation}\label{H^1 5}
\aligned
&\bigg|\int_{\Bbb R^2_+}\phi(v+he^{-x_2})\cdot\nabla(he^{-x_2})\, dx\bigg|\\
&\qquad \le C(\| v+he^{-x_2}\|_{L^2}+\| v+he^{-x_2}\|^2_{L^4})\| h\|_{H^1_{x_1}}\\
&\qquad \le C(\| v\|_{L^2}+\| h\|_{L^2_{x_1}}+\| v\|^2_{L^4}+\| h\|^2_{L^4_{x_1}})\| h\|_{H^1_{x_1}}\\
&\qquad \le C(\| v\|_{H^1}+\| v\|^2_{H^1}+\| h\|_{C^1_t H^2_{x_1}}+\| h\|^2_{C^1_t H^2_{x_1}})\| h\|_{C^1_t H^2_{x_1}}.
\endaligned
\end{equation}
From \eqref{H^1 1}-\eqref{H^1 5}, we can conclude that
\begin{equation*}
\aligned
\frac{d}{dt}\| v\|^2_{H^1}
&\le 2\bigg|\int_{\Bbb R^2_+}h_{x_1 x_1 t}e^{-x_2}v\, dx\bigg|
 +C(\| v\|^2_{H^1}+\| h\|^2_{C^1_t H^2_{x_1}})\| h\|_{C^1_t H^2_{x_1}}\\
&\le C(\| v\|_{H^1}+\| v\|^2_{H^1}+\| h\|_{C^1_t H^2_{x_1}}+\| h\|^2_{C^1_t H^2_{x_1}})\| h\|_{C^1_t H^2_{x_1}}\\
&\le C(\| v\|^2_{H^1}+\| h\|^2_{C^1_t H^2_{x_1}})(1+\| h\|_{C^1_t H^2_{x_1}}),
\endaligned
\end{equation*}
that is,
\begin{equation*}
\| v\|^2_{H^1}\le \| g\|^2_{H^1}+CS\| h\|^2_{C^1_t H^2_{x_1}}(1+\| h\|_{C^1_t H^2_{x_1}})+C(1+\| h\|_{C^1_t H^2_{x_1}})\int^t_0\| v\|^2_{H^1}ds,
\end{equation*}
where $C$ depends only on $\phi$. Gronwall's inequality gives
$$
\| v\|_{H^1}\le \{\| g\|^2_{H^1}+CS\| h\|^2_{C^1_t H^2_{x_1}}(1+\| h\|_{C^1_t H^2_{x_1}})\}^{1/2}e^{CS(1+\| h\|_{C^1_t H^2_{x_1}})}. \vspace{-.7cm}
$$
\end{proof}

Now we derive the $H^2$-estimates based on the $H^1$-estimates we just obtained.
\begin{lem}\label{H^2 estimates}
Suppose $(g,h) \in H^2(\Bbb R^2_+)\times h\in C^1([0,S];H^2(\Bbb R))$, and $\phi$ satisfy the condition \eqref{phi-1}.
Then the solution $v$ of \eqref{rewrite} obtained in Lemma {\rm\ref{H^2_S}} satisfies the estimate
\begin{equation*}
\| v\|_{H^2}\le C(1+S)^{1/2}\exp\left\{ CS(1+S)^{1/2}e^{CS}\right\},
\end{equation*}
where $C>0$ is a constant depending only on $g$, $h$ and $\phi$.
\end{lem}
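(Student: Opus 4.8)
The plan is to bound $\|v(t)\|_{H^2}$ directly from the fixed-point identity \eqref{rewrite}, $v=\widetilde g+\mathcal B h+\mathcal C v$: estimate each of the three terms in $H^2(\mathbb R^2_+)$ by means of the elliptic regularity of Proposition \ref{reg-1}, and then close the resulting integral inequality for $\|v(t)\|_{H^2}$ using the \emph{already-established} global $H^1$-bound of Lemma \ref{H^1 estimates}. For the two inhomogeneous terms, applying Proposition \ref{reg-1} to the sources (and using $\|e^{-x_2}\|_{H^2(0,\infty)}<\infty$) gives $\|\widetilde g\|_{H^2}\le\|g\|_{H^2}+C\|h\|_{C^1_t H^2_{x_1}}$ and $\sup_{t\in[0,S]}\|\mathcal B h(t)\|_{H^2}\le C\|h\|_{C^1_t H^2_{x_1}}$, while for the Duhamel term,
$$
\|\mathcal C v(t)\|_{H^2}\le\int_0^t\big\|(\rI-\Delta)^{-1}\{\div(\phi(v+he^{-x_2}))\}(\tau)\big\|_{H^2}\,d\tau\le C\int_0^t\|\div(\phi(v+he^{-x_2}))(\tau)\|_{L^2}\,d\tau .
$$

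The crux is the bound on $\|\div(\phi(v+he^{-x_2}))\|_{L^2}$. Writing $w=v+he^{-x_2}$ and $\div(\phi(w))=\phi'(w)\cdot\nabla w$, I would split
$$
\|\div(\phi(w))\|_{L^2}\le\|\phi'(w)\|_{L^\infty}\,\|\nabla w\|_{L^2}.
$$
For the first factor, \eqref{phi-1} ($\phi(0)=0$, $\|\phi''\|_{L^\infty}\le C$) and the mean value theorem give $|\phi'(w)|\le|\phi'(0)|+C|w|$, and since $H^2(\mathbb R^2_+)\hookrightarrow L^\infty(\mathbb R^2_+)$ and $H^2(\mathbb R)\hookrightarrow L^\infty(\mathbb R)$ this yields $\|\phi'(w)\|_{L^\infty}\le C(1+\|v\|_{H^2}+\|h\|_{C^1_t H^2_{x_1}})$. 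For the second factor I would deliberately spend only one derivative in $L^2$: $\|\nabla w\|_{L^2}\le\|v\|_{H^1}+C\|h\|_{C^1_t H^2_{x_1}}$, and then invoke Lemma \ref{H^1 estimates} to replace $\|v(\tau)\|_{H^1}$ by the explicit $S$-dependent bound $M_1(S):=C(1+S)^{1/2}e^{CS}$ coming from \eqref{estimate2} (with $C$ depending on $g,h,\phi$). Combining, $\|\div(\phi(v+he^{-x_2}))(\tau)\|_{L^2}\le CM_1(S)\,(1+\|v(\tau)\|_{H^2})$, which is at most \emph{linear} in $\|v\|_{H^2}$. Feeding this into \eqref{rewrite} produces
$$
\|v(t)\|_{H^2}\le C_0+CM_1(S)\int_0^t\big(1+\|v(\tau)\|_{H^2}\big)\,d\tau,\qquad C_0:=\|g\|_{H^2}+C\|h\|_{C^1_t H^2_{x_1}},
$$
and Gronwall's inequality applied to $1+\|v(t)\|_{H^2}$ gives a bound of the form $\|v(t)\|_{H^2}\le C(1+S)^{1/2}\exp\{CS(1+S)^{1/2}e^{CS}\}$ with $C$ depending only on $g,h,\phi$, which is the assertion.

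The one point requiring care — and the reason the estimate closes globally with no appeal whatsoever to the Burgers dissipation — is the bookkeeping in the nonlinear term: one must place exactly one derivative, measured in $L^2$, on the gradient factor so that it is absorbed by the globally controlled $H^1$-norm, leaving the $L^\infty$-factor $\|\phi'(w)\|_{L^\infty}$ to cost only a single power of $\|v\|_{H^2}$; this is precisely what turns the \emph{a priori} quadratic nonlinearity into a linear Gronwall inequality. A cruder splitting (bounding $\|\nabla w\|_{L^2}$ by $\|v\|_{H^2}$, or attempting $\|\phi'(w)\|_{L^4}\|\nabla w\|_{L^4}$ — the latter not even available, since $\phi'(w)$ need not decay as $|x|\to\infty$) would only yield a local-in-time bound. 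Note also that it is the borderline two-dimensional embedding $H^2\hookrightarrow L^\infty$ that makes $\|\phi'(w)\|_{L^\infty}$ both finite and controllable — the manifestation here of the extra singularity of the 2D Green function flagged in the Introduction. Everything else is a routine repetition of the computations already carried out in Lemmas \ref{H^2_S} and \ref{H^1 estimates}.
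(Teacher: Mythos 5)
Your argument is correct, but it follows a genuinely different route from the paper's. The paper proves this lemma by an energy method: it multiplies (\ref{GBBM2}a) by $\Delta v$ and integrates over $\bbR^2_+$, obtaining $\tfrac{1}{2}\tfrac{d}{dt}\big(\|\nabla v\|_{L^2}^2+\|\Delta v\|_{L^2}^2\big)=\int_{\bbR^2_+} h_{x_1x_1t}e^{-x_2}\Delta v\,dx-\int_{\bbR^2_+}\div(\phi(v+he^{-x_2}))\,\Delta v\,dx$, and then controls the nonlinearity by writing $|\phi'(w)|\le C(1+|w|)$ for $w=v+he^{-x_2}$ and estimating the quadratic piece as $\|w\|_{L^4}\|\nabla w\|_{L^4}\|\Delta v\|_{L^2}$ via $H^1\hookrightarrow L^4$; the $H^1$ factor is then replaced by the global bound \eqref{estimate2} of Lemma \ref{H^1 estimates} and Gronwall closes the estimate. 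You instead work directly on the integral identity \eqref{rewrite} and use Proposition \ref{reg-1} to reduce everything to an $L^2$ bound on $\div(\phi(w))$, which you split as $\|\phi'(w)\|_{L^\infty}\|\nabla w\|_{L^2}$, placing the $H^2$-costly factor in $L^\infty$ (via $H^2\hookrightarrow L^\infty$) and the $H^1$-controlled factor in $L^2$. The two splittings are mirror images of one another, and both turn the a priori quadratic nonlinearity into a Gronwall inequality that is linear in $\|v\|_{H^2}$ with coefficient governed by the global $H^1$ bound --- exactly the point you identify as the crux. Your route has the mild advantage of bypassing the equivalence of $\|v\|_{H^2}$ with $\|v\|_{L^2}+\|\Delta v\|_{L^2}$ for functions vanishing on $\partial\bbR^2_+$, which the energy method implicitly needs, and of being the natural companion to the mild formulation already used in Lemma \ref{H^2_S}. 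Two small quibbles: $H^2(\bbR^2_+)\hookrightarrow L^\infty$ is not borderline in two dimensions (it is $H^1\hookrightarrow L^\infty$ that fails); and your dismissal of an $L^4$--$L^4$ splitting applies only to the naive pairing $\|\phi'(w)\|_{L^4}\|\nabla w\|_{L^4}$ --- after separating off the constant part of $\phi'$, the $L^4$--$L^4$ splitting of the remainder is precisely what the paper uses, and it works equally well.
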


\begin{proof}
Multiplying (\ref{GBBM2}a) by $\Delta u$
and then integrating on $\Bbb R^2_+$, we have
\begin{equation}\label{H^2 1}
\aligned
&\frac{1}{2}\frac{d}{dt}\int_{\Bbb R^2_+}(|\nabla v|^2+|\Delta v|^2)dx\\
&\qquad =\int_{\Bbb R^2_+}h_{x_1 x_1 t}e^{-x_2}\Delta v\, dx
 -\int_{\Bbb R^2_+}\div(\phi(v+he^{-x_2}))\Delta v\, dx.
\endaligned
\end{equation}
By the mean value theorem and \eqref{phi-1},
\begin{equation}\label{H^2 3}
\aligned
|\div(\phi(v+he^{x_2}))|&\le |\phi'(v+he^{-x_2})|\, |\nabla(v+he^{-x_2})|\\
&\le C(1+|v+he^{-x_2}|)\, |\nabla(v+he^{-x_2})|.
\endaligned
\end{equation}
Thus, H\"older's inequality gives
\begin{equation*}
\aligned
\frac{d}{dt}\int_{\Bbb R^2_+}(|\nabla v|^2+|\Delta v|^2)dx
&\le 2\| h_{x_1 x_1 t}\|_{L^2_{x_1}}\|\Delta v\|_{L^2}+C\|\nabla( v+he^{-x_2})\|_{L^2}\|\Delta v\|_{L^2}\\
&\quad +C\|v+he^{-x_2}\|_{L^4}\|\nabla(v+he^{-x_2})\|_{L^4}\|\Delta v\|_{L^2}.
\endaligned
\end{equation*}
By the Sobolev embedding $H^1(\Bbb R^2_+)\hookrightarrow L^4(\Bbb R^2_+)$ and
Young's inequality,
\begin{equation*}
\aligned
\frac{d}{dt}(\|\nabla v\|^2_{L^2}+\|\Delta v\|^2_{L^2})
&\le C(\| h\|^2_{C^1_t H^2_{x_1}}+\| h\|^4_{C^1_t H^2_{x_1}})\\
&\quad +C(1+\| v\|_{C_t H^1_x}+\| h\|_{C^1_t H^2_{x_1}})\| v\|^2_{H^2},
\endaligned
\end{equation*}
where $C>0$ depends only on $\phi$; that is,
\begin{equation}\label{H^2 3}
\aligned
\|\nabla v\|^2_{L^2}+\|\Delta v\|^2_{L^2}
&\le C\{\| g\|^2_{H^2}+S(\| h\|^2_{C^1_t H^2_{x_1}}+\| h\|^4_{C^1_t H^2_{x_1}})\}\\
&\quad +C(1+\| v\|_{C_t H^1_x}+\| h\|_{C^1_t H^2_{x_1}})\int^t_0 \| v\|^2_{H^2} ds.
\endaligned
\end{equation}
Combining \eqref{estimate1} and \eqref{H^2 3}, we obtain
\begin{equation}\label{H^2 4}
\aligned
\| v\|^2_{H^2}&\le C\{\| g\|^2_{H^2}+S\| h\|^2_{C^1_t H^2_{x_1}}(1+\| h\|_{C^1_t H^2_{x_1}})^2\}\\
&\quad +C(1+\| v\|_{C_t H^1_x}+\| h\|_{C^1_t H^2_{x_1}})\int^t_0 \| v\|^2_{H^2} ds.
\endaligned
\end{equation}
Applying \eqref{estimate2} to \eqref{H^2 4}, we have
\begin{equation*}
\| v\|^2_{H^2}\le C(1+S)+C(1+S)^{1/2}e^{CS}\int^t_0 \| v\|^2_{H^2} ds,
\end{equation*}
where $C$ depends only on $g$, $h$, and $\phi$.
Therefore, by Gronwall's inequality,
$$
\| v\|_{H^2}\le C(1+S)^{1/2}\exp\left\{ CS(1+S)^{1/2}e^{CS}\right\}
$$
which concludes the proof of the lemma.
\end{proof}

\vskip .3in
\section{Continuous dependence of the solution on data}
\label{sec:stability}
\setcounter{equation}{0}

This section is devoted to proving Theorem
\ref{stability}. That is, we establish the desired continuous dependence. For the sake of clarity, we will divide the rest of this section into two subsections.
The first subsection proves the continuous dependence in the regularity setting of
$H^2$ while the second subsection focuses on the continuous dependence in the intersection space of $H^2$ and a H\"{o}lder class.  The precise statements
can be found in the lemmas below.

\subsection{Continuous dependence in $H^2$}

Let $\mathcal{L}_{\rm m}$ denote the mapping that takes the data $g$ and $h$
to the corresponding solutions of \eqref{GBBMB_eq}. By Theorem \ref{Classical} we have
\begin{equation*}
\mathcal{L}_{\rm m}:X_{\rm m}=H^2(\Bbb R^2_+)\times C^1([0,T];H^2(\Bbb R))\longrightarrow C([0,T];H^2(\Bbb R^2_+)).
\end{equation*}
Since $H^2(\Bbb R^2_+)$ and $C^1([0,T];H^2(\Bbb R))$ are Banach spaces,
the space $X_{\rm m}$ equipped with the usual product topology is also a Banach space.

\begin{lem}\label{s1}
Suppose that $\phi\in C^2(\Bbb R,\Bbb R^2)$ satisfies the condition \eqref{phi-1}. Then $\mathcal{L}_{\rm m}$ is continuous.
\end{lem}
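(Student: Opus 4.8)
The plan is to prove Lipschitz-type continuity of $\mathcal{L}_{\rm m}$ by estimating the difference of two solutions in terms of the difference of their data, exploiting the integral formulation \eqref{rewrite} together with the global bounds from Lemmas \ref{H^1 estimates} and \ref{H^2 estimates}. Fix two data pairs $(g_1,h_1), (g_2,h_2)\in X_{\rm m}$, let $v_1,v_2$ be the corresponding solutions of \eqref{rewrite}, and set $w=v_1-v_2$. Since $v_i$ solve $v_i=\widetilde g_i+\mathcal{B}h_i+\mathcal{C}_{h_i}v_i$ (where I make explicit that $\mathcal{C}$ depends on $h$ through the argument $v+he^{-x_2}$), subtracting gives $w=(\widetilde g_1-\widetilde g_2)+(\mathcal{B}h_1-\mathcal{B}h_2)+(\mathcal{C}_{h_1}v_1-\mathcal{C}_{h_2}v_2)$. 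The first term is controlled in $H^2$ by $\|g_1-g_2\|_{H^2}+C\|h_1-h_2\|_{C_t H^2_{x_1}}$, and the $\mathcal{B}$-difference by $C\|h_1-h_2\|_{C^1_t H^2_{x_1}}$ via Proposition \ref{reg-1}, exactly as in \eqref{estimate-Av-1}.

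The main work is the $\mathcal{C}$-difference. I would write $\mathcal{C}_{h_1}v_1-\mathcal{C}_{h_2}v_2 = -\int_0^t (\rI-\Delta)^{-1}\{\div(\phi(v_1+h_1 e^{-x_2})-\phi(v_2+h_2 e^{-x_2}))\}\,d\tau$ and, setting $a_i=v_i+h_i e^{-x_2}$, note $a_1-a_2 = w+(h_1-h_2)e^{-x_2}$. Then split $\div(\phi(a_1)-\phi(a_2))$ into a term $\phi'(a_1)\cdot\nabla(a_1-a_2)$ and a term $(\phi'(a_1)-\phi'(a_2))\cdot\nabla a_2$, exactly mirroring \eqref{estimate-Av-2}. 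Using Proposition \ref{reg-1}, $\|\phi''\|_{L^\infty}\le C$ from \eqref{phi-1}, the Sobolev embeddings $H^1(\bbR^2_+)\hookrightarrow L^4$ and $H^2(\bbR^2_+)\hookrightarrow L^\infty$, and the a priori $H^2$-bounds on $v_1,v_2$ and the data $h_i$ (which absorb $\|\nabla a_2\|_{L^4}$, $\|a_1\|_{L^\infty}$, etc.\ into a constant $K$ depending only on the data), one arrives at a pointwise-in-time estimate of the shape
\begin{equation*}
\|w(t)\|_{H^2}^2 \le C\Big(\|g_1-g_2\|_{H^2}^2+\|h_1-h_2\|_{C^1_t H^2_{x_1}}^2\Big) + CK\,T\int_0^t \|w(s)\|_{H^2}^2\,ds + CK\,T^2\|h_1-h_2\|_{C_t H^2_{x_1}}^2,
\end{equation*}
where the last term collects the contributions where $a_1-a_2$ is replaced by its $(h_1-h_2)e^{-x_2}$ part. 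Gronwall's inequality then yields $\|w\|_{C_t H^2_x}\le C(K,T)\big(\|g_1-g_2\|_{H^2}+\|h_1-h_2\|_{C^1_t H^2_{x_1}}\big)$, which is precisely continuity of $\mathcal{L}_{\rm m}$ on $X_{\rm m}$.

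The step I expect to be the main obstacle is the bookkeeping of the constant $K$: to close the Gronwall argument one needs the $H^2$-norms of $v_1$ and $v_2$ (and of $\nabla(v_i+h_i e^{-x_2})$ in $L^4$, which requires $H^2$ control, not merely $H^1$) to be bounded by a constant depending only on the data, uniformly in $t\in[0,T]$; this is exactly what Lemma \ref{H^2 estimates} supplies, so the argument must invoke it carefully and track that the same $K$ works for both solutions when the two data pairs range over a bounded set — or, more simply, one fixes $(g_2,h_2)$ and lets $(g_1,h_1)\to(g_2,h_2)$, in which case all constants are controlled by the data of the limit point plus $1$. A secondary technical point is handling the cross term $(\phi'(a_1)-\phi'(a_2))\cdot\nabla a_2$: it is estimated by $\|\phi''\|_{L^\infty}\|a_1-a_2\|_{L^4}\|\nabla a_2\|_{L^4}$, and one must verify $\|a_1-a_2\|_{L^4}\le C\|w\|_{H^1}+C\|h_1-h_2\|_{C_t H^2_{x_1}}$ via the trace/extension structure of $h e^{-x_2}$, which is routine but should be stated.
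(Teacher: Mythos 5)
Your proposal is correct and follows essentially the same route as the paper: subtract the two integral equations, split the nonlinear difference by the mean value theorem into $\phi'$-difference and $\nabla$-difference terms, estimate via Proposition \ref{reg-1}, H\"older, and Sobolev embedding, and close with Gronwall. Your explicit attention to the uniformity of the constant $K$ (via Lemma \ref{H^2 estimates}) is a slightly more careful rendering of what the paper states only as ``$C$ depends on $v_1$, $v_2$, $h_1$, $h_2$, and $\phi$,'' but it is not a different argument.
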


\begin{proof}
Let $(g_i,h_i)\in X_{\rm m}$ and $u_i=\mathcal{L}_{\rm m}(g_i,h_i)$ be the mild solution of \eqref{GBBMB_eq}
corresponding to the initial data $g_i$ and the boundary data $h_i$, $i=1,2$.
Set $v_i=u_i-h_i e^{-x_2}$, $i=1,2$. Then $v_i$ satisfies the following initial-boundary
value problem:
\begin{equation*}
\begin{cases}
(v_i)_t-\Delta (v_i)_t-(h_i)_{x_1 x_1 t}e^{-x_2}+\div\left(\phi(v_i+h_i e^{-x_2})\right)=0,\\
v_i(x,0)=g_i(x)-h_i(x_1,0)e^{-x_2}:=\widetilde{g}_i(x),\\
v_i\big((x_1,0),t\big)=0.
\end{cases}
\end{equation*}
Define $w=v_1-v_2$. Then $w$ satisfies:
\begin{equation}\label{s1-1}
\begin{cases}
w_t-\Delta w_t-h_{x_1 x_1 t}e^{-x_2}+\div\left(\phi(v_1+h_1 e^{-x_2})\right)
-\div\left(\phi(v_2+h_2 e^{-x_2})\right)=0,\\
w(x,0)=\widetilde{g}(x),\qquad w\big((x_1,0),t\big)=0,
\end{cases}
\end{equation}
where $\widetilde{g}=\widetilde{g}_1-\widetilde{g}_2$ and $h=h_1-h_2$.
In addition, we derive that $w$ satisfies the following integral equation:
\begin{equation}\label{s1-2}
\aligned
w(x,t) &= \widetilde{g}(x) \hspace{-1pt}+\hspace{-1pt} (\rI-\Delta)^{-1}\left(\{ h_{x_1 x_1}(x_1,t) \hspace{-1pt}-\hspace{-1pt} h_{x_1 x_1}(x_1,0)\}e^{-x_2}\right)\\
&\quad - \int^t_0(\rI-\Delta)^{-1} \left\{\div(\phi(v_1 \hspace{-1pt}+\hspace{-1pt} h_1 e^{-x_2}))
\hspace{-1pt}-\hspace{-1pt}  \div(\phi(v_2 \hspace{-1pt}+\hspace{-1pt} h_2 e^{-x_2}))\right\}d\tau.
\endaligned
\end{equation}
Given $\varepsilon>0$.
Suppose that the distance between $(g_1,h_1)$ and $(g_2,h_2)$ in $X_{\rm m}$ is small enough such that
\begin{enumerate}
\item[(a)] $\|\widetilde{g}\|_{H^2}\le\varepsilon$, \qquad (b) $\| h\|_{C^1_t H^2_{x_1}}\le\varepsilon$.
\end{enumerate}
Taking $H^2$ norm on both sides of \eqref{s1-2} and using Proposition \ref{reg-1}, we derive
\begin{equation}\label{s1-3}
\aligned
\| w\|_{H^2}&\le \|\widetilde{g}\|_{H^2}+\| h\|_{C^1_t H^2_{x_1}}\\
&\quad +C\int^t_0\|\div(\phi(v_1+h_1 e^{-x_2}))
-\div(\phi(v_2+h_2 e^{-x_2}))\|_{L^2}\, d\tau.
\endaligned
\end{equation}
Since
\begin{align}
&\div(\phi(v_1+h_1 e^{-x_2}))-\div(\phi(v_2+h_2 e^{-x_2})) \nonumber\\
&\quad =\phi'(v_1+h_1 e^{-x_2})\cdot\nabla(v_1+h_1 e^{-x_2})-\phi'(v_2+h_2 e^{-x_2})\cdot\nabla(v_2+h_2 e^{-x_2}) \nonumber\\
&\quad =\{\phi'(v_1+h_1 e^{-x_2})-\phi'(v_2+h_2 e^{-x_2})\}\cdot\nabla(v_1+h_1 e^{-x_2}) \label{s1-3.5}\\
&\qquad +\phi'(v_2+h_2 e^{-x_2})\cdot\nabla(w+he^{-x_2}), \nonumber
\end{align}
the mean value theorem and condition \eqref{phi-1} yield
\begin{equation}\label{s1-4}
\aligned
&\left|\div(\phi(v_1+h_1 e^{-x_2}))-\div(\phi(v_2+h_2 e^{-x_2}))\right|\\
&\quad \le C\{|w \hspace{-1pt}+\hspace{-1pt} he^{-x_2}|\cdot |\nabla(v_1 \hspace{-1pt}+\hspace{-1pt} h_1 e^{-x_2})| \hspace{-1pt}
 +\hspace{-1pt} (1+|v_2 \hspace{-1pt}+\hspace{-1pt} h_2 e^{-x_2}|)\cdot |\nabla(w+he^{-x_2})|\}.
\endaligned
\end{equation}
Applying \eqref{s1-4} and H\"older's inequality to \eqref{s1-3}, we obtain
\begin{equation*}
\aligned
\| w\|_{H^2}&\le \|\widetilde{g}\|_{H^2}+\| h\|_{C^1_t H^2_{x_1}}
+C\int^t_0\| w+he^{-x_2}\|_{L^4}\|\nabla(v_1+h_1 e^{-x_2})\|_{L^4}d\tau\\
&\quad +C\int^t_0\|\nabla(w+he^{-x_2})\|_{L^2}d\tau+C\int^t_0\| v_2+h_2 e^{-x_2}\|_{L^4}\|\nabla(w+he^{-x_2})\|_{L^4}d\tau.
\endaligned
\end{equation*}
By Sobolev's inequality,
\begin{equation}\label{s1-5}
\aligned
\| w\|_{H^2}&\le \|\widetilde{g}\|_{H^2}+\| h\|_{C^1_t H^2_{x_1}}
+C\int^t_0\| w+he^{-x_2}\|_{H^2}d\tau\\
&\le \|\widetilde{g}\|_{H^2}+C(1+T)\| h\|_{C^1_t H^2_{x_1}}
+C\int^t_0\| w\|_{H^2}d\tau,
\endaligned
\end{equation}
where $C$ depends only on $v_1$, $v_2$, $h_1$, $h_2$, and $\phi$.
Then Gronwall's inequality gives
\begin{equation}\label{s1-6}
\| w\|_{C_t H^2_x}\le \{\|\widetilde{g}\|_{H^2}+\| h\|_{C^1_t H^2_{x_1}}\}e^{CT}.
\end{equation}
Note that
$$w=v_1-v_2=u_1-u_2-(h_1-h_2)e^{-x_2}=\mathcal{L}_{\rm m}(g_1,h_1)-\mathcal{L}_{\rm m}(g_2,h_2)-he^{-x_2}.$$
Therefore, by \eqref{s1-6},
$$\aligned
\| \mathcal{L}_{\rm m}(g_1,h_1)-\mathcal{L}_{\rm m}(g_2,h_2)\|_{C_t H^2_x}
&\le \| w\|_{C_t H^2_x}+\| h\|_{C_t H^2_{x_1}}\\
&\le \{\|\widetilde{g}\|_{H^2}+\| h\|_{C^1_t H^2_{x_1}}\}e^{CT} \le e^{CT}\varepsilon.
\endaligned \vspace{-0.7cm}
$$
\end{proof}

\subsection{Continuous dependence in the intersection of $H^2$ and a H\"{o}lder space}
This subsection proves the continuous dependence in the setting of
the intersection of $H^2$ and a H\"{o}ler space. First, we introduce the metrics on the spaces $C^{2,\alpha}_{\rm loc}(\Omega)$ and
$C^1([0,T];C^{2,\alpha}_{\rm loc}(\Omega))$, where $\Omega$ can be $\Bbb R^2_+$ or $\Bbb R$.
Let $\{\Omega_i\}^\infty_{i=1}$ be an increasing sequence of compact subsets of $\Omega$ satisfy
\begin{enumerate}
\item[(i)] $\Omega_i\subset\subset\Omega_{i+1}$ for all $i\in\Bbb N$,
\item[(ii)] $\displaystyle\bigcup^\infty_{i=1}\Omega_i=\Omega$.
\end{enumerate}
For a function $f\in C^1([0,T];C^{2,\alpha}_{\rm loc}(\Omega))$, we define
$$\rho_i(f)=\| f\|_{C^1([0,T];C^{2,\alpha}(\Omega_i))}\qquad\text{for}\ i\in\Bbb N.$$
Then $\{\rho_i\}$ forms a family of seminorms on $C^{2,\alpha}_{\rm loc}(\Omega)$.
For $f_1,f_2\in C^1([0,T];C^{2,\alpha}_{\rm loc}(\Omega))$, we define
$$d(f_1,f_2)=\sum^\infty_{i=1}2^{-i}\frac{\rho_i(f_1-f_2)}{1+\rho_i(f_1-f_2)}.$$
Then $d$ is a metric on $C^1([0,T];C^{2,\alpha}_{\rm loc}(\Omega))$.
It is clear that $f_k\rightarrow f$ with respect to $d$ if and only if
$\rho_i(f_k-f)\rightarrow 0$ for all $i$. The topology induced by the metric $d$
is independent of the choice of the sequence $\{\Omega_i\}^\infty_{i=1}$.
A metric on the space $C^{2,\alpha}_{\rm loc}(\Omega)$ can be defined similarly if
we replace the seminorm above by
$$\rho_i(f)=\| f\|_{C^{2,\alpha}(\Omega_i)}$$
for $f\in C^{2,\alpha}_{\rm loc}(\Omega)$ and $i\in\Bbb N$.

\smallskip
Now we fix a sequence $\{\Omega_i\}^\infty_{i=1}$ of compact convex sets in $\Bbb R^2_+$
such that the conditions (i) and (ii) hold. Let $I_i$ denote the projection of $\Omega_i$ to $x_1$-axis.
Then $\{ I_i\}^\infty_{i=1}$ forms a sequence of compact sets in $\Bbb R$ satisfying (i) and (ii).
As stated above, the sequences $\{\Omega_i\}^\infty_{i=1}$ and $\{ I_i\}^\infty_{i=1}$
induce metrics $d_1$ on $C^{2,\alpha}_{\rm loc}(\Bbb R^2_+)$, $d_2$ on $C^1([0,T];C^{2,\alpha}_{\rm loc}(\Bbb R))$,
and $d_3$ on $C^1([0,T];C^{2,\alpha}_{\rm loc}(\Bbb R^2_+))$ respectively.
In Theorem \ref{Classical}, we get that for a given pair
$$(g,h)\in X_{\rm c}:=[H^2(\Bbb R^2_+)\cap C^{2,\alpha}_{\rm loc}(\Bbb R^2_+)]
\times C^1([0,T];H^2(\Bbb R)\cap C^{2,\alpha}_{\rm loc}(\Bbb R))$$
of initial and a boundary data, then \eqref{GBBMB_eq} admits a unique classical solution
$$u\in Y:=C([0,T];H^2(\Bbb R^2_+))\cap C^1([0,T];C^{2,\alpha}_{\rm loc}(\Bbb R^2_+)).$$
If we let $\mathcal{L}_{\rm c}$ denote the mapping that takes the pair $(g,h)$
into the corresponding classical solution $u$, then
\begin{equation}\label{mapping}
\mathcal{L}_{\rm c}:X_{\rm c}\rightarrow Y.
\end{equation}
We remark that if $(M,d_M)$ and $(N,d_N)$ are two metric spaces, then the product space $M\times N$ is
a metric space with metric $d_{M\times N}(x,y)=d_M(x,y)+d_N(x,y)$ for $x,y\in M\times N$,
and their intersection $M\cap N$ is a metric space with metric $d_{M\cap N}(x,y)=d_M(x,y)+d_N(x,y)$ for $x,y\in M\cap N$.

We can immediately conclude that $\mathcal{L}_{\rm c}$ is a continuous mapping in \eqref{mapping}
if we prove that $i\circ\mathcal{L}_{\rm c}$ and $j\circ\mathcal{L}_{\rm c}$ are both continuous
where $i$ and $j$ are the natural inclusions of $Y$ into $C([0,T];H^2(\Bbb R^2_+))$ and
$C^1([0,T];C^{2,\alpha}_{\rm loc}(\Bbb R^2_+))$ respectively.

\begin{lem}\label{s2}
Suppose that $\phi\in C^3(\Bbb R,\Bbb R^2)$ satisfies the conditions \eqref{phi-1}-\eqref{phi-3}.
Then $\mathcal{L}_{\rm c}$ is continuous.
\end{lem}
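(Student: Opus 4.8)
The plan is to use the reduction noted just before the statement: since $i$ and $j$ are the natural inclusions of $Y$ into $C([0,T];H^2(\Bbb R^2_+))$ and $C^1([0,T];C^{2,\alpha}_{\rm loc}(\Bbb R^2_+))$, it suffices to prove that $i\circ\mathcal{L}_{\rm c}$ and $j\circ\mathcal{L}_{\rm c}$ are continuous on $X_{\rm c}$. The continuity of $i\circ\mathcal{L}_{\rm c}$ comes for free: the topology of $X_{\rm c}$ is finer than the product topology of $X_{\rm m}=H^2(\Bbb R^2_+)\times C^1([0,T];H^2(\Bbb R))$, and $\mathcal{L}_{\rm c}$ agrees with $\mathcal{L}_{\rm m}$ on $X_{\rm c}$, so this follows from Lemma~\ref{s1}. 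Hence the whole task is the continuity of $j\circ\mathcal{L}_{\rm c}$, i.e. to show that if $(g_k,h_k)\to(g,h)$ in $X_{\rm c}$ then, for every fixed $i$, $\rho_i\big(\mathcal{L}_{\rm c}(g_k,h_k)-\mathcal{L}_{\rm c}(g,h)\big)\to0$, where $\rho_i(\cdot)=\|\cdot\|_{C^1([0,T];C^{2,\alpha}(\Omega_i))}$.

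For this I would set $u_k=\mathcal{L}_{\rm c}(g_k,h_k)$, $u=\mathcal{L}_{\rm c}(g,h)$, $v_k=u_k-h_ke^{-x_2}$, $v=u-he^{-x_2}$, $w_k=v_k-v$, $\widehat h_k=h_k-h$, $\widehat g_k=\widetilde g_k-\widetilde g$, and subtract the equations of type (\ref{GBBM2}a) satisfied by $v_k$ and by $v$; since $(\rI-\Delta)^{-1}$ is linear this gives
$$
(w_k)_t=(\rI-\Delta)^{-1}\big\{(\widehat h_k)_{x_1 x_1 t}e^{-x_2}-D_k\big\},\qquad w_k(\cdot,0)=\widehat g_k,
$$
with $D_k=\div(\phi(v_k+h_ke^{-x_2}))-\div(\phi(v+he^{-x_2}))$. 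Fix $i$ and apply Proposition~\ref{reg-2} with $\Omega_i\subset\subset\Omega_{i+1}$ (and Proposition~\ref{reg-1} for the global $L^2$--$H^2$ bound). If one can show
$$
\Theta_k:=\big\|(\widehat h_k)_{x_1 x_1 t}e^{-x_2}-D_k\big\|_{C([0,T];C^{0,\alpha}(\Omega_{i+1}))}+\big\|(\widehat h_k)_{x_1 x_1 t}e^{-x_2}-D_k\big\|_{C([0,T];L^2(\Bbb R^2_+))}\longrightarrow0,
$$
then $\|(w_k)_t\|_{C([0,T];C^{2,\alpha}(\Omega_i))}\le C\Theta_k\to0$, and integrating in time $\|w_k\|_{C([0,T];C^{2,\alpha}(\Omega_i))}\le\|\widehat g_k\|_{C^{2,\alpha}(\Omega_i)}+T\,C\Theta_k\to0$; since $u_k-u=w_k+\widehat h_ke^{-x_2}$ and $\|\widehat h_ke^{-x_2}\|_{C^1([0,T];C^{2,\alpha}(\Omega_i))}\le C\|\widehat h_k\|_{C^1([0,T];C^{2,\alpha}(I_{i+1}))}\to0$, this yields $\rho_i(u_k-u)\to0$. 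So everything reduces to proving $\Theta_k\to0$.

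The pieces of $\Theta_k$ that involve only $\widehat h_k$ or that are of $H^2$-type are routine: $\|\widehat g_k\|_{C^{2,\alpha}(\Omega_i)}$, $\|\widehat h_k\|_{C^1([0,T];C^{2,\alpha}(I_{i+1}))}$ and $\|\widehat h_k\|_{C^1_t H^2_{x_1}}$ all tend to $0$ because $(g_k,h_k)\to(g,h)$ in $X_{\rm c}$, which disposes of $(\widehat h_k)_{x_1 x_1 t}e^{-x_2}$ in both $C^{0,\alpha}(\Omega_{i+1})$ and $L^2$; and the $L^2(\Bbb R^2_+)$-norm of $D_k$ is estimated exactly as in the proof of Lemma~\ref{s1} (splitting as in \eqref{s1-3.5} and using Sobolev together with $\|w_k\|_{C_t H^2_x}\to0$, which we already have from the first step). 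The real work is the bound for $D_k$ in $C^{0,\alpha}(\Omega_{i+1})$. I would write, following \eqref{s1-3.5},
$$
D_k=\big[\phi'(v_k+h_ke^{-x_2})-\phi'(v+he^{-x_2})\big]\cdot\nabla(v_k+h_ke^{-x_2})+\phi'(v+he^{-x_2})\cdot\nabla(w_k+\widehat h_ke^{-x_2}),
$$
and use two ingredients. First, \textbf{uniform local H\"older bounds:} running the bootstrap of Lemma~\ref{regularity} together with the $H^2$ bound of Lemma~\ref{H^2 estimates}, and noting that $(g_k,h_k)\to(g,h)$ in $X_{\rm c}$ makes $\sup_k\|g_k\|_{C^{2,\alpha}(\Omega_j)}$, $\sup_k\|h_k\|_{C^1_t H^2_{x_1}}$ and $\sup_k\|h_k\|_{C^1([0,T];C^{2,\alpha}(I_j))}$ finite for each fixed $j$, one obtains $M_j:=\sup_k\|v_k\|_{C^1([0,T];C^{2,\alpha}(\Omega_j))}<\infty$ for every $j$, and the analogous bound for $v$. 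Second, a \textbf{composition estimate:} since $\phi\in C^3$ with $\|\phi''\|_{L^\infty},\|\phi'''\|_{L^\infty}\le C$ by \eqref{phi-1} and \eqref{phi-3}, for $f,g$ bounded in $C^{0,\alpha}(\Omega_{i+1})$ one has $\|\phi'(f)-\phi'(g)\|_{C^{0,\alpha}(\Omega_{i+1})}\le C\|f-g\|_{C^{0,\alpha}(\Omega_{i+1})}$ (write $\phi'(f)-\phi'(g)=\big(\int_0^1\phi''(g+s(f-g))\,ds\big)(f-g)$ and estimate the product in $C^{0,\alpha}$, the H\"older seminorm of the integral factor costing one power of $\|\phi'''\|_{L^\infty}$); this is precisely where hypothesis \eqref{phi-3} enters. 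With these, the first summand of $D_k$ is controlled in $C^{0,\alpha}(\Omega_{i+1})$ by $C(M_{i+1})\|w_k+\widehat h_ke^{-x_2}\|_{C^{0,\alpha}(\Omega_{i+1})}$, which $\to0$ since $\|\widehat h_k\|_{C^{2,\alpha}(I_{i+1})}\to0$ and $\|w_k(t)\|_{C^{0,\alpha}(\Omega_{i+1})}\le C\|w_k(t)\|_{H^2(\Bbb R^2_+)}$ by the embedding $H^2(\Bbb R^2_+)\hookrightarrow C^{0,\alpha}(\Bbb R^2_+)$; in the second summand $\phi'(v+he^{-x_2})=\phi'(u)$ is a fixed bounded element of $C([0,T];C^{0,\alpha}(\Omega_{i+1}))$, so the only delicate quantity left is $\|\nabla w_k\|_{C([0,T];C^{0,\alpha}(\Omega_{i+1}))}$.

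\textbf{The main obstacle} is exactly this term: we know $w_k\to0$ in $C([0,T];H^2(\Bbb R^2_+))$ but not, a priori, in $C^1([0,T];C^{1,\alpha}_{\rm loc}(\Bbb R^2_+))$, so smallness cannot be invoked directly. I would close the estimate by interpolation of H\"older norms on the compact convex set $\Omega_{i+2}$: for each $t$,
$$
\|w_k(t)\|_{C^{1,\alpha}(\Omega_{i+1})}\le C\,\|w_k(t)\|_{C^0(\Omega_{i+2})}^{\,1-\theta}\,\|w_k(t)\|_{C^{2,\alpha}(\Omega_{i+2})}^{\,\theta},\qquad \theta=\tfrac{1+\alpha}{2+\alpha},
$$
where the $C^{2,\alpha}$ factor is bounded uniformly in $k$ and $t$ by $M_{i+2}$ plus the bound for $v$, and the $C^0$ factor is $\le C\|w_k\|_{C([0,T];H^2(\Bbb R^2_+))}\to0$; hence $\|\nabla w_k\|_{C([0,T];C^{0,\alpha}(\Omega_{i+1}))}\to0$. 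This forces $\Theta_k\to0$ and finishes the proof. The remaining verifications — the composition estimate, the uniform bounds $M_j$, and the $L^2$ bound for $D_k$ — are routine but lengthy, as the introduction already warns.
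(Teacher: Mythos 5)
Your proposal is correct in outline and shares the skeleton of the paper's argument: the reduction to the two component maps, the use of Lemma \ref{s1} for the $C([0,T];H^2)$ component, the local Schauder estimate of Proposition \ref{reg-2} applied to the equation for the difference $w_k$, the decomposition \eqref{s1-3.5}, and the use of \eqref{phi-3} precisely to make $s\mapsto\phi'(s)$ Lipschitz as a map into \Holder\ classes. Where you genuinely diverge is at the step you correctly single out as the main obstacle, namely controlling $\|\nabla w_k\|_{C([0,T];C^{0,\alpha}(\Omega'))}$ inside the $C^{0,\alpha}$ bound for $D_k$. You obtain it by \Holder\ interpolation, $\|w_k\|_{C^{1,\alpha}}\le C\|w_k\|_{C^0}^{1-\theta}\|w_k\|_{C^{2,\alpha}}^{\theta}$, with the $C^0$ factor small (via $H^2\hookrightarrow C^0$ and Lemma \ref{s1}) and the $C^{2,\alpha}$ factor uniformly bounded via uniform local bounds $M_j=\sup_k\|v_k\|_{C^1([0,T];C^{2,\alpha}(\Omega_j))}<\infty$. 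The paper instead inserts the intermediate estimate \eqref{s2-2}--\eqref{s2-5}: it takes the $C^{1,\alpha}(\Omega')$ norm of the integral equation \eqref{s1-2} and bounds the nonlinear integral term through $H^3(\Bbb R^2_+)\hookrightarrow C^{1,\alpha}$, so that $\|w_k\|_{C^{1,\alpha}(\Omega')}$ is controlled directly by $\|w_k\|_{C_tH^2_x}$ and the data differences — quantities already known to vanish — with no interpolation and, more importantly, with no need for uniform $C^{2,\alpha}$ bounds on the family $\{v_k\}$. Your route is workable, but it charges you the extra (and not entirely free, since Lemma \ref{regularity} is stated only qualitatively) task of re-running the bootstrap with explicit constants to produce the bounds $M_j$; the paper's $H^3$-embedding trick bypasses that entirely and is the cleaner of the two. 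Both arguments invoke \eqref{phi-3} at the same place and for the same reason, and both conclude by letting the seminorms $\rho_i$ go to zero one at a time.
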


\begin{proof}
By the discussions before the statement of this lemma, it suffices to prove
\begin{equation}\label{s2-1}
\mathcal{L}_{\rm c}:X_{\rm c}\rightarrow C([0,T];H^2(\Bbb R^2_+))\quad\text{and}\quad\mathcal{L}_{\rm c}:X_{\rm c}\rightarrow C^1([0,T];C^{2,\alpha}_{\rm loc}(\Bbb R^2_+))
\end{equation}
are both continuous. Comparing the metrics of the spaces $X_{\rm m}$ and $X_{\rm c}$, we can easily get the continuity of
the mapping $\mathcal{L}_{\rm c}:X_{\rm c}\rightarrow C([0,T];H^2(\Bbb R^2_+))$ from Lemma \ref{s1}.
In this proof, we focus on showing that the second mapping of \eqref{s2-1} is sequentially continuous.

Let $(g_i,h_i)\in X_{\rm c}$ and $u_i=\mathcal{L}_{\rm c}(g_i,h_i)$ be the classical solution of \eqref{GBBMB_eq}
corresponding to the initial data $g_i$ and the boundary data $h_i$, $i=1,2$.
Let $w$, $\widetilde{g}$, and $h$ be defined as in the proof of Lemma \ref{s1}.
Then $w$ satisfies the initial-boundary value problem \eqref{s1-1}, the integral equation \eqref{s1-2}, and the estimate \eqref{s1-6}.
Let $\Omega$, $\Omega'$, and $\Omega''$ be any given three compact convex sets in
$\Bbb R^2_+$ with $\Omega''\subset\subset\Omega'\subset\subset\Omega$ and let $I$ and $I'$ be the projections
of $\Omega$ and $\Omega'$ to the $x_1$-axis respectively. First, we take $C^{1,\alpha}(\Omega')$ norm
on both sides of \eqref{s1-2} and use Sobolev's inequality to obtain
\begin{align}
&\| w\|_{C^{1,\alpha}(\Omega')} \nonumber\\
&\quad\le \|\widetilde{g}\|_{C^{1,\alpha}(\Omega')} \hspace{-1pt}+\hspace{-1pt}  \left\| (\rI-\Delta)^{-1}\left(\{ h_{x_1 x_1}(x_1,t)-h_{x_1 x_1}(x_1,0)\}e^{-x_2}\right)\right\|_{C^{2,\alpha}(\Omega')} \nonumber\\
&\qquad +\int^t_0\left\|(\rI-\Delta)^{-1}\left\{\div(\phi(v_1 \hspace{-1pt}+\hspace{-1pt} h_1 e^{-x_2}))
-\div(\phi(v_2 \hspace{-1pt}+\hspace{-1pt} h_2 e^{-x_2}))\right\}\right\|_{C^{1,\alpha}(\Bbb R^2_+)}d\tau \nonumber\\
&\quad \le \|\widetilde{g}\|_{C^{1,\alpha}(\Omega')}  \hspace{-1pt}+\hspace{-1pt} \left\| (\rI-\Delta)^{-1}\left(\{ h_{x_1 x_1}(x_1,t)-h_{x_1 x_1}(x_1,0)\}e^{-x_2}\right)\right\|_{C^{2,\alpha}(\Omega')} \label{s2-2} \\
&\qquad +\int^t_0\left\|(\rI-\Delta)^{-1}\left\{\div(\phi(v_1 \hspace{-1pt}+\hspace{-1pt} h_1 e^{-x_2}))
-\div(\phi(v_2 \hspace{-1pt}+\hspace{-1pt} h_2 e^{-x_2}))\right\}\right\|_{H^3(\Bbb R^2_+)}d\tau. \nonumber
\end{align}
Lemma \eqref{reg-2} implies
\begin{equation}\label{s2-3}
\aligned
&\left\| (\rI-\Delta)^{-1}\left(\{ h_{x_1 x_1}(x_1,t)-h_{x_1 x_1}(x_1,0)\}e^{-x_2}\right)\right\|_{C^{2,\alpha}(\Omega')}\\
&\qquad\le C\left\{\left\|\{ h_{x_1 x_1}(x_1,t)-h_{x_1 x_1}(x_1,0)\}e^{-x_2}\right\|_{C^{0,\alpha}(\Omega)}\right.\\
&\qquad\qquad +\left.\left\|\{ h_{x_1 x_1}(x_1,t)-h_{x_1 x_1}(x_1,0)\}e^{-x_2}\right\|_{L^2(\Omega)}\right\}\\
&\qquad\le C\left(\| h\|_{C^{2,\alpha}(I)}+\| h\|_{H^2(I)}\right),
\endaligned
\end{equation}
where $C$ depends only on the distance between $\Omega'$ and $\partial\Omega$.
Employing Proposition \ref{reg-1} and applying mean value theorem together with \eqref{phi-1}
to \eqref{s1-3.5}, we derive that
\begin{equation*}
\aligned
&\left\|(\rI-\Delta)^{-1}\left\{\div(\phi(v_1+h_1 e^{-x_2}))
-\div(\phi(v_2+h_2 e^{-x_2}))\right\}\right\|_{H^3(\Bbb R^2_+)}\\
&\qquad\le C\left\|\div(\phi(v_1+h_1 e^{-x_2}))
-\div(\phi(v_2+h_2 e^{-x_2}))\right\|_{H^1(\Bbb R^2_+)}\\
&\qquad\le C\left\{\| w+he^{-x_2}\|_{W^{1,4}(\Bbb R^2_+)}\|\nabla(v_1+h_1 e^{-x_2})\|_{L^4(\Bbb R^2_+)}\right.\\
&\qquad\qquad +\| w+he^{-x_2}\|_{L^\infty(\Bbb R^2_+)}\|\nabla(v_1+h_1 e^{-x_2})\|_{H^1(\Bbb R^2_+)}\\
&\qquad\qquad +\|v_2+h_2 e^{-x_2}\|_{W^{1,4}(\Bbb R^2_+)}\|\nabla(w+he^{-x_2})\|_{L^4(\Bbb R^2_+)}\\
&\qquad\qquad +\left.(1+\|v_2+h_2 e^{-x_2}\|_{L^\infty(\Bbb R^2_+)})\|\nabla(w+he^{-x_2})\|_{H^1(\Bbb R^2_+)}\right\}.
\endaligned
\end{equation*}
Thus, Sobolev's inequality gives
\begin{equation}\label{s2-4}
\aligned
&\left\|(\rI-\Delta)^{-1}\left\{\div(\phi(v_1+h_1 e^{-x_2}))
-\div(\phi(v_2+h_2 e^{-x_2}))\right\}\right\|_{H^3(\Bbb R^2_+)}\\
&\qquad\le C\left(\| w\|_{H^2(\Bbb R^2_+)}+\| h\|_{H^2(\Bbb R)}\right),
\endaligned
\end{equation}
where $C$ depends only on $v_1$, $v_2$, $h_1$, $h_2$, and $\phi$.
Combining the estimates \eqref{s2-2}-\eqref{s2-4}, we get
\begin{equation}\label{s2-5}
\aligned
\| w\|_{C^{1,\alpha}(\Omega')}&\le \|\widetilde{g}\|_{C^{1,\alpha}(\Omega')}
+C\left(\| h\|_{C^1([0,T];C^{2,\alpha}(I))}+\| h\|_{C([0,T];H^2(I))}\right)\\
&\quad +CT\left(\| w\|_{C([0,T];H^2(\Bbb R^2_+))}+\| h\|_{C([0,T];H^2(\Bbb R))}\right)\\
&\le \|\widetilde{g}\|_{C^{1,\alpha}(\Omega')}+C\| h\|_{C^1([0,T];C^{2,\alpha}(I))}\\
&\quad +C(1+T)\| h\|_{C([0,T];H^2(\Bbb R))}+CT\| w\|_{C([0,T];H^2(\Bbb R^2_+))}.
\endaligned
\end{equation}
Next, by taking $C^{2,\alpha}(\Omega'')$ norm on both sides of \eqref{s1-2}, we have
\begin{align}
& \| w\|_{C^{2,\alpha}(\Omega'')} \le \|\widetilde{g}\|_{C^{2,\alpha}(\Omega'')} \nonumber\\
&\quad + \left\| (\rI-\Delta)^{-1}\left(\{ h_{x_1 x_1}(x_1,t)-h_{x_1 x_1}(x_1,0)\}e^{-x_2}\right)\right\|_{C^{2,\alpha}(\Omega'')} \label{s2-6}\\
&\quad +\int^t_0\left\|(\rI-\Delta)^{-1}\left\{\div(\phi(v_1+h_1 e^{-x_2}))
-\div(\phi(v_2+h_2 e^{-x_2}))\right\}\right\|_{C^{2,\alpha}(\Omega'')}d\tau. \nonumber
\end{align}
We use Proposition \ref{reg-2} to deduce that
\begin{equation}\label{s2-7}
\aligned
&\left\|(\rI-\Delta)^{-1}\left\{\div(\phi(v_1+h_1 e^{-x_2}))
-\div(\phi(v_2+h_2 e^{-x_2}))\right\}\right\|_{C^{2,\alpha}(\Omega'')}\\
&\qquad\le C\left(\left\|\div(\phi(v_1+h_1 e^{-x_2}))
-\div(\phi(v_2+h_2 e^{-x_2}))\right\|_{C^{0,\alpha}(\Omega')}\right.\\
&\qquad\qquad +\left.\left\|\div(\phi(v_1+h_1 e^{-x_2}))
-\div(\phi(v_2+h_2 e^{-x_2}))\right\|_{L^2(\Omega')}\right).
\endaligned
\end{equation}
For the estimate of the last term in the right hand side of \eqref{s2-7},
we use the proof of Lemma \ref{s1} to obtain
\begin{equation}\label{s2-7.5}
\left\|\div(\phi(v_1 \hspace{-1pt}+\hspace{-1pt} h_1 e^{-x_2}))
-\div(\phi(v_2 \hspace{-1pt}+\hspace{-1pt} h_2 e^{-x_2}))\right\|_{L^2(\Omega')} \hspace{-1pt}\le\hspace{-1pt} C\| w \hspace{-1pt}+\hspace{-1pt} he^{-x_2}\|_{H^2(\Bbb R^2_+)}.
\end{equation}
In view of \eqref{s1-3.5} and the convexity of $\Omega'$,
\begin{equation*}
\aligned
&\left\|\div(\phi(v_1+h_1 e^{-x_2}))
-\div(\phi(v_2+h_2 e^{-x_2}))\right\|_{C^{0,\alpha}(\Omega')}\\
&\qquad\le \left\|\phi'(v_1+h_1 e^{-x_2})-\phi'(v_2+h_2 e^{-x_2})\right\|_{C^1(\Omega')}
 \left\|\nabla(v_1+h_1 e^{-x_2})\right\|_{C^{0,\alpha}(\Omega')}\\
&\qquad\quad +\left\|\phi'(v_2+h_2 e^{-x_2})\right\|_{C^1(\Omega')}
 \left\|\nabla(w+he^{-x_2})\right\|_{C^{0,\alpha}(\Omega')}.
\endaligned
\end{equation*}
Since $\phi\in C^3(\Bbb R,\Bbb R^2)$ satisfies the conditions \eqref{phi-1}-\eqref{phi-3},
\begin{equation*}
\left\|\phi'(v_2+h_2 e^{-x_2})\right\|_{C^1(\Omega')}\le C(1+\| v_2+h_2 e^{-x_2}\|_{C^1(\Omega')})\le C
\end{equation*}
and
\begin{equation*}
\aligned
&\left\|\phi'(v_1+h_1 e^{-x_2})-\phi'(v_2+h_2 e^{-x_2})\right\|_{C^1(\Omega')}\\
&\qquad\le \left\|\phi''(v_1+h_1 e^{-x_2})-\phi''(v_2+h_2 e^{-x_2})\right\|_{C(\Omega')}
 \left\|\nabla(v_1+h_1 e^{-x_2})\right\|_{C(\Omega')}\\
&\qquad\quad +\left\|\phi''(v_2+h_2 e^{-x_2})\right\|_{C(\Omega')}\left\|\nabla(w+h e^{-x_2})\right\|_{C(\Omega')}\\
&\qquad\le C\left\| w+h e^{-x_2}\right\|_{C^{1,\alpha}(\Omega')},
\endaligned
\end{equation*}
where $C$ depends only on $v_1$, $v_2$, $h_1$, $h_2$, and $\phi$. Thus, we have
\begin{align}
&\left\|\div(\phi(v_1+h_1 e^{-x_2}))
-\div(\phi(v_2+h_2 e^{-x_2}))\right\|_{C^{0,\alpha}(\Omega')} \le C\left\| w+h e^{-x_2}\right\|_{C^{1,\alpha}(\Omega')} \nonumber\\
&\qquad\le C\|\widetilde{g}\|_{C^{1,\alpha}(\Omega')}+C\| h\|_{C^1([0,T];C^{2,\alpha}(I))} \label{s2-8}\\
&\qquad\quad +C(1+T)\| h\|_{C([0,T];H^2(\Bbb R))}+CT\| w\|_{C([0,T];H^2(\Bbb R^2_+))}, \nonumber
\end{align}
where we used \eqref{s2-5} in the last inequality.
The estimates \eqref{s2-3}, \eqref{s2-6}-\eqref{s2-8} yield
\begin{equation*}
\aligned
&\| w\|_{C^1([0,T];C^{2,\alpha}(\Omega''))} \le C(1+T)\|\widetilde{g}\|_{C^{2,\alpha}(\Omega')}+C(1+T)\| h\|_{C^1([0,T];C^{2,\alpha}(I))}\\
&\qquad\quad +C(1+T)^2\| h\|_{C([0,T];H^2(\Bbb R))}+CT(1+T)\| w\|_{C([0,T];H^2(\Bbb R^2_+))}.
\endaligned
\end{equation*}
It follows from \eqref{s1-6} that
\begin{equation}\label{s2-9}
\aligned
\| w\|_{C^1([0,T];C^{2,\alpha}(\Omega''))}\le e^{CT}&\Big\{\|\widetilde{g}\|_{C^{2,\alpha}(\Omega')}+\|\widetilde{g}\|_{H^2(\Bbb R^2_+)}\\
&\quad +\| h\|_{C^1([0,T];C^{2,\alpha}(I))}+\| h\|_{C([0,T];H^2(\Bbb R))}\Big\}.
\endaligned
\end{equation}

Finally, let $\{(\frak{g}_k,\frak{h}_k)\}^\infty_{k=1}$ be a sequence of $X_{\rm c}$ that converges to $(\frak{g}_0,\frak{h}_0)$ in $X_{\rm c}$.
Suppose that $\frak{u}_k=\mathcal{L}_{\rm c}(\frak{g}_k,\frak{h}_k)$, $k\in\Bbb N\cup\{ 0\}$ be the corresponding classical solutions
of \eqref{GBBMB_eq} with respect to the initial data $\frak{g}_k$ and the boundary data $\frak{h}_k$.
Set, for $k\in\Bbb N\cup\{ 0\}$,
\begin{equation*}
\begin{cases}
\frak{v}_k=\frak{u}_k-\frak{h}_k e^{-x_2},\\
\widetilde{\frak{g}}_k=\frak{g}_k-\frak{h}_k e^{-x_2}.
\end{cases}
\end{equation*}
We define, for $k\in\Bbb N$,
\begin{equation*}
\begin{cases}
\frak{w}_k=\frak{v}_k-\frak{v}_0,\\
\widetilde{\frak{g}}_{k,0}=\widetilde{\frak{g}}_k-\widetilde{\frak{g}}_0,\\
\frak{h}_{k,0}=\frak{h}_k-\frak{h}_0.
\end{cases}
\end{equation*}
Since $\{(\frak{g}_k,\frak{h}_k)\}^\infty_{k=1}$ converges to $(\frak{g}_0,\frak{h}_0)$ in $X_{\rm c}$,
we have
\begin{enumerate}
\item[(a)] $d_1(\widetilde{\frak{g}}_k,\widetilde{\frak{g}}_0)\rightarrow 0$,
\item[(b)] $\|\widetilde{\frak{g}}_k-\widetilde{\frak{g}}_0\|_{H^2(\Bbb R^2_+)}\rightarrow 0$,
\item[(c)] $d_2(\frak{h}_k,\frak{h}_0)\rightarrow 0$,
\item[(d)] $\|\frak{h}_k-\frak{h}_0\|_{C([0,T];H^2(\Bbb R))}\rightarrow 0$;
\end{enumerate}
or equivalently,
\begin{enumerate}
\item[(a$'$)] $\|\widetilde{\frak{g}}_{k,0}\|_{C^{2,\alpha}(\Omega_i)}\rightarrow 0$ for all $i\in\Bbb N$,
\item[(b$'$)] $\|\widetilde{\frak{g}}_{k,0}\|_{H^2(\Bbb R^2_+)}\rightarrow 0$,
\item[(c$'$)] $\|\frak{h}_{k,0}\|_{C^1([0,T];C^{2,\alpha}(I_i))}\rightarrow 0$ for all $i\in\Bbb N$,
\item[(d$'$)] $\|\frak{h}_{k,0}\|_{C([0,T];H^2(\Bbb R))}\rightarrow 0$.
\end{enumerate}
For any fixed $i\in\Bbb N$, applying the estimate \eqref{s2-9}, we get
\begin{equation*}
\aligned
\|\frak{w}_k\|_{C^1([0,T];C^{2,\alpha}(\Omega_i))}\le e^{CT}
&\Big\{\|\widetilde{\frak{g}}_{k,0}\|_{C^{2,\alpha}(\Omega_{i+1})}+\|\widetilde{\frak{g}}_{k,0}\|_{H^2(\Bbb R^2_+)}\\
&\ +\|\frak{h}_{k,0}\|_{C^1([0,T];C^{2,\alpha}(I_{i+1}))}+\|\frak{h}_{k,0}\|_{C([0,T];H^2(\Bbb R))}\Big\}\rightarrow 0,
\endaligned
\end{equation*}
and hence
$$\aligned
&\|\mathcal{L}_{\rm c}(\frak{g}_k,\frak{h}_k)-\mathcal{L}_{\rm c}(\frak{g}_0,\frak{h}_0)\|_{C^1([0,T];C^{2,\alpha}(\Omega_i))}\\
&\qquad\le \|\frak{w}_k\|_{C^1([0,T];C^{2,\alpha}(\Omega_i))}+\|\frak{h}_{k,0}\|_{C^1([0,T];C^{2,\alpha}(\Omega_i))}\\
&\qquad\le e^{CT}\Big\{\|\widetilde{\frak{g}}_{k,0}\|_{C^{2,\alpha}(\Omega_{i+1})}+\|\widetilde{\frak{g}}_{k,0}\|_{H^2(\Bbb R^2_+)}\\
&\qquad\qquad\quad +\|\frak{h}_{k,0}\|_{C^1([0,T];C^{2,\alpha}(I_{i+1}))}+\|\frak{h}_{k,0}\|_{C([0,T];H^2(\Bbb R))}\Big\}\rightarrow 0,
\endaligned$$
which implies that
$$
d_3\left(\mathcal{L}_{\rm c}(\frak{g}_k,\frak{h}_k),\mathcal{L}_{\rm c}(\frak{g}_0,\frak{h}_0)\right)\rightarrow 0
$$
which is equivalent to that the mapping $\mathcal{L}_{\rm c}:X_{\rm c}\rightarrow C^1([0,T];C^{2,\alpha}_{\rm loc}(\Bbb R^2_+))$ is sequentially continuous.
\end{proof}

\vskip 0.5cm
\section{Results for the GBBM-Burgers equation}\label{sec:nu1ne0}
\setcounter{equation}{0}

The purpose of this section is to generalize the above results to the 2D GBBM-Burgers equation, that is, equation \eqref{GBBMB_eq} with $\nu_1=1$. The results established in previous sections also hold for the  GBBM-Burgers equation.

\smallskip
The proofs of Theorems \ref{Classical} and \ref{stability} for the case when $\nu_1 = 1$ are essentially the same as those for the case when $\nu_1=0$. In fact, as in the case when $\nu_1 = 0$, we rewrite equation (\ref{GBBMB_eq}) as
\begin{subequations}\label{GBBMB2}
\begin{alignat}{2}
(\rI - \Delta) v_t + \Delta v + \div \left(\phi(v+he^{-x_2})\right)
&= \widetilde{h} e^{-x_2} \qquad&&\text{in}\quad\bbR^2_+ \times (0,T), \\
v &= \widetilde{g} &&\text{on}\quad\bbR^2_+ \times \{t=0\}, \\
v &= 0 &&\text{on}\quad \bdy\bbR^2_+ \times (0,T),
\end{alignat}
\end{subequations}
where $\widetilde{g}$ is again given by (\ref{defn:gtilde}) and $\widetilde{h}$ is defined by
$$\widetilde{h}(x,t)=h_{x_1 x_1 t}(x,t) - h_{x_1 x_1}(x,t) - h(x,t).$$
(\ref{GBBMB2}a) can be modified as
\begin{equation}\label{modify}
v+v_t = (\rI-\Delta)^{-1}\{ v+\widetilde{h}e^{-x_2}-\div(\phi(v+he^{-x_2}))\}
\end{equation}
which suggest that
\begin{align}
v(x,t) &= e^{-t} \widetilde{g}(x) + \int_0^t e^{-(t-s)} (\rI-\Delta)^{-1}\{v + \widetilde{h}e^{-x_2}-\div(\phi(v+he^{-x_2}))\} ds \nonumber\\
&= e^{-t} \widetilde{g}(x) + \int_0^t e^{-(t-s)} (\rI-\Delta)^{-1} \big\{ (h_{x_1 x_1 s} - h_{x_1 x_1} - h) e^{-x_2}\big\} ds \\
&\quad + \int_0^t e^{-(t-s)} (\rI-\Delta)^{-1} \{v - \div(\phi(v+he^{-x_2})) \} ds \,. \nonumber
\end{align}
Based on the fact that $e^{-(t-s)}$ is bounded by $1$ for $s\in [0,t]$, exactly the same procedure of proving the existence of a unique solution (using the contraction mapping principle) for the case $\nu_1 = 0$ can be applied to yield the results corresponding to Theorems \ref{Classical} and \ref{stability}.

\vskip .3in
{\bf Acknowledgments.} AC was supported by the National Science Council (NSC) of Taiwan under grant 100-2115-M-008-009-MY3, MH was supported by NSC under grant 101-2115-M-008-005, YL was supported by NSC under grant
102-2115-M-008-001, JW was supported by NSF under grant DMS 1209153, and JY was supported by NSC under grant 101-2115-M-126-002.

\vskip .3in

\end{document}